\def\sm{\rm \setminus}  
\def\A{{\rm \mathcal{A}}}
\begin{document}

\begin{sloppypar}

\newtheorem{theorem}{Theorem}[section]
\newtheorem{problem}[theorem]{Problem}
\newtheorem{corollary}[theorem]{Corollary}
\newtheorem{definition}[theorem]{Definition}
\newtheorem{conjecture}[theorem]{Conjecture}
\newtheorem{question}[theorem]{Question}
\newtheorem{lemma}[theorem]{Lemma}
\newtheorem{proposition}[theorem]{Proposition}
\newtheorem{quest}[theorem]{Question}
\newtheorem{example}[theorem]{Example}
\newcommand{\remark}{\medskip\par\noindent {\bf Remark.~~}}
\newcommand{\pp}{{\it p.}}
\newcommand{\de}{\em}

\title{  {The group vertex magicness of  unicyclic graphs and bicyclic graphs}\thanks{
 E-mail addresses: liaoqianfen@163.com(Q. Liao),  wjliu6210@126.com(W. Liu).}}

\author{Qianfen Liao, Weijun Liu$\dagger$\\
{\small School of Mathematics and Statistics, Central South University} \\
{\small New Campus, Changsha, Hunan, 410083, P.R. China. }\\
}

\maketitle

\vspace{-0.5cm}

\begin{abstract}
Let $G$ be a simple undirected graph and $\A$ an additive abelian group with identity $0$.
A mapping $\ell: V(G)\rightarrow \A\sm\{0\}$ is said to be a $\A$-vertex magic labeling of $G$ if there exists an element $\mu$ of $\A$ such that $\omega(v)=\sum_{u\in N(v)}\ell(u)=\mu$ for any vertex $v$ of $G$.
A graph $G$ that admits such a labeling is called an $\A$-vertex magic graph and $\mu$ is called magic constant.
If $G$ is $\A$-vertex magic graph for any nontrivial abelian group $\A$, then $G$ is called a group vertex magic graph.
In this paper, we give a characterization of unicyclic graphs with diameter at most $4$ which are $\A$-vertex magic.
Moreover, let $G$ be a bicyclic graph of diameter $3$, then $G$ is group vertex magic if and only if $G=M_{11}(0,0)$.
\end{abstract}

{{\bf Key words:}}
abelian group; group vertex magic; unicyclic graphs; bicyclic graphs; diameter.
\section{Introduction}
Let $G=(V,E)$ be a simple undirected graph and $\A$ an additive abelian group with identity $0$.
For any vertex $v\in G$, $N(v)=\{u\in V: uv\in E\}$ is the set of vertices that adjacent to $u$, and $d(v)=|N(v)|$ is called the degree of $v$.

Lee et al. \cite{S2} introduced the concept of group-magic graphs as below.
\begin{definition}
Let $\A$ be an abelian group.
A graph $G=(V,E)$ is said to be $\A$-magic if there exists a labeling $\ell: E\rightarrow \A\setminus\{0\}$ such that the induced vertex labeling $\ell^+: V\rightarrow \A$ defined  by $\ell^+(v)=\sum_{uv\in E}\ell(uv)$ is a constant map.
\end{definition}

More results about group-magic graphs, we refer to \cite{S1,S2,R,W2}.
Based on the concept of group-magic graph, Kamatchi et al. \cite{K1} introduced the concept of group vertex magic graphs.
\begin{definition}
A mapping $\ell: V(G)\rightarrow \A\sm\{0\}$ is said to be an $\A$-vertex magic labeling of $G$ if there exists an element $\mu$ of $\A$ such that $\omega(v)=\sum_{u\in N(v)}\ell(u)=\mu$ for any vertex $v$ of $G$.
A graph $G$ that admits such a labeling is called an $\A$-vertex magic graph and $\mu$ is said a magic constant.
If $G$ is $\A$-vertex magic graph for any nontrivial abelian group $\A$, then $G$ is called a group vertex magic graph.
\end{definition}

After proposing the concept of group vertex magic graphs, Kamatchi et al. give a characterization of $V_4$-vertex magic trees with diameter at most $4$.
Subsequently, K.M. Sabeel et al. \cite{K2} consider the $V_4$-vertex magicness of trees with diameter $5$.
More generally, K.M. Sabeel and K. Paramasivam \cite{W1} characterize $\A$-vertex magic trees of diameter at most $5$ for any finite abelian group $\A$.
Moreover, S. Balamoorthy et al. \cite{B} discussed the products of group vertex magic graphs.

The graph $G$ is unicyclic if it contains only one cycle and $G$ is bicyclic if it contains exactly two cycles.
In this paper, we give a characterization of unicyclic graphs with diameter at most $4$ which are $\A$-vertex magic.
Moreover, we consider the group vertex magicness of bicyclic graphs with diameter $3$.

Before we get into the discussion,  some useful notations are listed here.
The distance $d(u,v)$ between vertices $u$ and $v$ is the length of a shortest path from $u$ to $v$.
The diameter of $G$ is defined by $diam(G)=\max\{d(u,v):u,v\in V\}$.
A vertex $v$ with $d(v)=1$ is a pendant vertex and the unique vertex adjacent to $v$ is a support vertex.
A vertex $v$ is called odd or even determined by the parity of its degree.
A vertex $v$ is called a weak support vertex if there is a unique pendant vertex adjacent to $v$.
If vertex $v$ is adjacent to two or more pendant vertices, then $v$ is a strong support vertex.
An element $g\in \A$ is called a square if $g\neq0$ and there exists $h\in \A$ such that $g=2h$.
Obviously, $h\neq0$.
Given a group $\A$, $e(\A)$ is the least positive integer $k$ such that $kg=0$ for any $g\in\A$.

\section{Preliminaries}

Clearly, if $G$ is $\A$-vertex magic graph with magic constant $\mu$ under mapping $\ell$ , then for any support vertex $v$, we have $\ell(v)=\mu$.
It follows that if  $G$ is $\A$-vertex magic graph and it contains support vertex, then $\mu\neq 0$.

For convenience, we propose the definition of generalized sun graph.
\begin{definition}
Let $G=(V,E)$ be a unicyclic graph with unique cycle $C_k(k<n)$.
Then $G$ is called a generalized sun, if all vertices in $V\setminus C_k$ are pendant.
\end{definition}

There is a simple but important result will be used.
\begin{proposition}\label{pro1}
Any regular graph $G$ is group vertex magic.
\end{proposition}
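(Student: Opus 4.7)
The plan is to exhibit, for an arbitrary nontrivial abelian group $\A$, an explicit $\A$-vertex magic labeling of any $r$-regular graph $G$. The natural candidate is the constant labeling: fix any nonzero element $a \in \A$ (which exists because $\A$ is nontrivial) and set $\ell(v) = a$ for every $v \in V(G)$.

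I would then verify the definition. Since every vertex $v$ has exactly $r$ neighbors, one computes
\[
\omega(v) \;=\; \sum_{u \in N(v)} \ell(u) \;=\; \sum_{u \in N(v)} a \;=\; ra,
\]
which is independent of $v$. Thus $\mu = ra$ serves as the magic constant, and the labeling takes values only in $\A \setminus \{0\}$, as required. Because $\A$ was an arbitrary nontrivial abelian group, $G$ is group vertex magic.

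The only subtlety worth flagging is that the definition requires the labels to lie in $\A \setminus \{0\}$, not the magic constant itself; in particular, there is no obstruction when $ra = 0$ (for example when $r$ is a multiple of $e(\A)$), so no case analysis on the order of $a$ or the value of $r$ is needed. Consequently there is essentially no main obstacle — the result is a direct verification, which is precisely why it is useful as a basic tool in the later characterizations.
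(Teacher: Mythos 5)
Your proof is correct and uses exactly the paper's approach: label every vertex with a fixed nonzero element $g$, so that $\omega(v)=rg$ is constant. Your added remark that the magic constant itself is allowed to be $0$ is a valid and worthwhile clarification, but the argument is the same.
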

\begin{proof}
For any abelian group $\A$, let $g$ be a non-identity element and define $\ell(v)=g$ for any $v\in V$.
Then $G$ is $\A$-vertex magic under mapping $\ell$.
\end{proof}

To prove the main results of this paper, we need several lemmas.
\begin{lemma}\cite{K1}\label{lem0}
If $G$ is a graph with two vertices $u$ and $v$ such that $|N(u)\cap N(v)|=deg(u)-1=deg(v)$, then $G$ is not $\A$-vertex magic for any abelian group $\A$.
\end{lemma}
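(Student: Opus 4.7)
The plan is to derive a contradiction by subtracting the magic-sum equations at $u$ and $v$. First, I would unpack the structural hypothesis. The equation $|N(u)\cap N(v)|=\deg(v)$ forces every neighbor of $v$ to also be a neighbor of $u$, i.e., $N(v)\sbs N(u)$. Combining this with $|N(u)\cap N(v)|=\deg(u)-1$ yields $|N(u)\sm N(v)|=1$, so there is a unique vertex $w\in N(u)\sm N(v)$. A quick sanity check shows that $u$ and $v$ are non-adjacent: if $uv\in E(G)$, then $v\in N(u)$ and $u\in N(v)\sbs N(u)$, which would force the loop $uu$, contradicting that $G$ is simple.

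Next, suppose for contradiction that $G$ is $\A$-vertex magic under some labeling $\ell$ with magic constant $\mu$. Applying the definition at both $u$ and $v$ gives
\[
\omega(u)=\sum_{x\in N(u)}\ell(x)=\mu\qquad\text{and}\qquad \omega(v)=\sum_{x\in N(v)}\ell(x)=\mu.
\]
Since $N(v)\sbs N(u)$ with $N(u)\sm N(v)=\{w\}$, subtracting the two equations produces
\[
\ell(w)=\omega(u)-\omega(v)=\mu-\mu=0,
\]
which contradicts $\ell(w)\in\A\sm\{0\}$. Hence no such labeling can exist, and $G$ fails to be $\A$-vertex magic for every abelian group $\A$.

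There is really no hard step here: once the inclusion $N(v)\sbs N(u)$ is extracted from the cardinality equation and the complement $N(u)\sm N(v)$ is pinned down as a singleton, the contradiction is a one-line subtraction and needs no assumption on $\A$ beyond that labels are nonzero. The only bookkeeping issue is confirming that $u$ and $v$ themselves do not lie in $N(u)\cap N(v)$, which is immediate from simplicity of $G$ as shown above.
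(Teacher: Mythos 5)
The paper does not prove this lemma itself---it is quoted from reference \cite{K1} without proof---so there is no in-paper argument to compare against. Your proof is correct and is the standard one: the hypothesis $|N(u)\cap N(v)|=\deg(v)$ indeed forces $N(v)\subseteq N(u)$, the hypothesis $|N(u)\cap N(v)|=\deg(u)-1$ pins down $N(u)\setminus N(v)$ as a single vertex $w$, and subtracting $\omega(v)=\mu$ from $\omega(u)=\mu$ leaves $\ell(w)=0$, contradicting that labels lie in $\A\setminus\{0\}$; no further assumptions on $\A$ are needed.
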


\begin{lemma}\label{lem1}
Let $\A$ be a  abelian group with $|\A|\geq 3$ and $g\in \A$. Then, for each $n\geq 2$, there exist $g_1, g_2, \ldots, g_n \in \A \sm \{0\}$ such that $g=g_1+g_2+\ldots+g_n$.
Hence, for any $g\in \A\setminus\{0\}$ and $n\geq 1$, there exist $g_1, g_2, \ldots, g_n \in \A \sm \{0\}$ such that $g=g_1+g_2+\ldots+g_n$.
\end{lemma}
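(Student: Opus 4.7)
The plan is to prove the first sentence by induction on $n$, starting from the base case $n=2$, and then derive the final ``Hence'' statement as an immediate consequence together with the trivial case $n=1$.

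For the base case $n=2$, I would split into two subcases according to whether $g=0$. If $g\neq 0$, I need $g_1\in\A\sm\{0,g\}$ so that $g_2:=g-g_1$ is also nonzero; such a $g_1$ exists precisely because $|\A|\ge 3$ forces $|\A\sm\{0,g\}|\ge 1$. If $g=0$, I simply pick any nonzero $g_1$ and set $g_2=-g_1$, which is nonzero since $g_1$ is. This is the only place where the assumption $|\A|\ge 3$ is actually used; the argument would fail for the cyclic group of order $2$ exactly because $\A\sm\{0\}$ has only one element.

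For the inductive step, assume the statement holds for some $n\ge 2$ and any target element. Given $g\in\A$, pick any nonzero element $g_{n+1}\in\A$ (possible since $|\A|\ge 3>1$), and apply the inductive hypothesis to the element $g-g_{n+1}\in\A$ with $n$ summands to obtain $g_1,\dots,g_n\in\A\sm\{0\}$ with $g_1+\cdots+g_n=g-g_{n+1}$. Then $g_1+\cdots+g_n+g_{n+1}=g$ is the required decomposition. Note that it is essential that the inductive hypothesis applies to \emph{every} element of $\A$, not just nonzero ones, since $g-g_{n+1}$ may well be $0$.

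Finally, the ``Hence'' clause follows by taking $g\in\A\sm\{0\}$: for $n\ge 2$, the first part applies directly, and for $n=1$ the choice $g_1=g$ works trivially since $g\neq 0$. I do not anticipate any real obstacle here; the only subtlety is remembering to phrase the inductive hypothesis so that it covers the arbitrary target $g-g_{n+1}$ (including possibly $0$), which is exactly why the first part of the lemma is stated for all $g\in\A$ rather than only for nonzero $g$.
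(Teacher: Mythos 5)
Your proof is correct and takes essentially the same route as the paper: induction on $n$ anchored at the base case $n=2$, with the ``Hence'' clause handled separately for $n=1$. The only differences are cosmetic --- your inductive step peels off a fresh nonzero summand and applies the hypothesis to $g-g_{n+1}$, whereas the paper splits the last summand of an existing decomposition into two nonzero parts; and your base case is in fact slightly more careful than the paper's, which asserts $g=g_1+(g-g_1)$ works for \emph{any} nonzero $g_1$ and overlooks that $g_1=g$ would force $g-g_1=0$.
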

\begin{proof}
We complete the proof by induction.
The result is true for $n=2$, since for any $g\in \A$, $g=g_1+(g-g_1)$ holds for any $g_1\in \A\setminus\{0\}$.
Now suppose the result is true for $n-1$.
Then $g=g_1+g_2+\cdots+g_{n-1}$ for $g_1, g_2,\ldots, g_{n-1}\in \A\setminus\{0\}$.
As there exist $g_{n-1}',g_n'\in \A\setminus\{0\}$ such that  $g_{n-1}=g_{n-1}'+g_n'$, we have
$g=g_1+g_2+\cdots+g_{n-1}'+g_n'$.
Thus, the result follows.
\end{proof}

It is obvious that a graph $G$ is $\mathbb{Z}_2$-vertex magic graph if and only if the degree of all vertices of $G$ are of the same parity.

\begin{lemma}\label{lem2}
Let $\A$ be an abelian group with $|\A|\geq 3$.
Then the  generalized sun graph $G$ is $\A$-vertex magic if and only if each non-pendant vertex is support vertex.
Further, if each non-pendant vertex is odd support vertex, then $G$ is group vertex magic.
\end{lemma}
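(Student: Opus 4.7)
The plan is to identify the non-pendant vertices with exactly the cycle vertices, since in a generalized sun every vertex off the cycle is a pendant. Because $k<n$, at least one pendant is present, so at least one cycle vertex must be a support vertex; this simple observation underpins everything.

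For the necessity direction, I would fix an $\A$-vertex magic labeling $\ell$ with constant $\mu$. The observation from the start of Section~2 gives $\ell(v)=\mu$ for every support vertex, and the existence of a pendant forces $\mu\neq 0$. Writing the cycle as $v_1,\ldots,v_k$ with indices modulo $k$ and setting $a_i=\ell(v_i)$, a non-support cycle vertex $v_j$ has only $v_{j-1},v_{j+1}$ as neighbors, so $a_{j-1}+a_{j+1}=\mu$. The key step is to argue that neither $v_{j-1}$ nor $v_{j+1}$ can be a support vertex: if one of them were, its label would already equal $\mu$, forcing the other to be $0$, which is forbidden. Thus each non-support cycle vertex is flanked by two non-support cycle vertices, and iterating this implication around the cycle forces \emph{every} cycle vertex to be non-support, contradicting the existence of at least one support cycle vertex.

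For the sufficiency direction, suppose every cycle vertex $v_i$ is a support vertex, carrying $s_i\ge 1$ pendants. I would pick any $\mu\in\A\setminus\{0\}$, label every cycle vertex with $\mu$, and for each $i$ assign to the $s_i$ pendants at $v_i$ nonzero elements summing to $-\mu$. Such a choice exists because, for $s_i=1$, the single pendant may simply be labeled $-\mu\neq 0$, and for $s_i\ge 2$ Lemma~\ref{lem1} (which is precisely where the hypothesis $|\A|\ge 3$ enters) expresses $-\mu$ as a sum of $s_i$ nonzero elements. A direct check then gives $\omega(p)=\mu$ for every pendant $p$ and $\omega(v_i)=2\mu+(-\mu)=\mu$ for every cycle vertex, so the labeling is $\A$-vertex magic.

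For the final assertion, the construction above already handles every $\A$ with $|\A|\ge 3$, so only $\A=\mathbb{Z}_2$ remains. I would invoke the parity observation stated just before the lemma: if every non-pendant vertex is an odd support vertex, then together with the pendants (which have degree $1$) all vertices of $G$ have odd degree, hence $G$ is $\mathbb{Z}_2$-vertex magic. The principal obstacle is the cascading support-vs-non-support argument in the necessity step; once that cycle walk is in place, everything else reduces to Lemma~\ref{lem1} together with a single parity check.
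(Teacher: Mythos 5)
Your proposal is correct and follows essentially the same route as the paper: both directions hinge on the observation that a non-support cycle vertex adjacent to a support vertex would force a label of $0$, and sufficiency uses the identical construction (label cycle vertices by $\mu$, pendants via Lemma~\ref{lem1} summing to $-\mu$), with the same parity check for $\mathbb{Z}_2$. The only cosmetic difference is that you propagate non-supportness around the entire cycle to contradict the existence of a support vertex, whereas the paper locates a support/non-support boundary and derives the contradiction there; the underlying computation is identical.
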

\begin{proof}
Let $C_k=v_1v_2\ldots v_k$ be the unique cycle contained in $G$.
Assume $G$ is $\A$-vertex magic with magic constant $g$ under mapping $\ell$,
and there exists vertex in $C_k$ is not support vertex.
Since $G$ contains support vertex, $g\neq0$.
Observe that there exist three adjacent vertices $v_j$, $v_{j+1}$ and $v_{j+2}$ satisfy one of the following conditions.

\vspace{1mm}
\textbf{Case 1:} $v_{j+1}$ is support vertex and, $v_{j}$ and $v_{j+2}$ are not.

\vspace{1mm}
Let $v$ be other adjacent vertex of $v_j$ in $C_k$ besides $v_{j+1}$.
It is possible that $v=v_{j+2}$.
Note that $\omega(v_j)=\ell(v_{j+1})+\ell(v)=g$ and $\ell(v_{j+1})=g$, which yield that $\ell(v)=0$.

\vspace{1mm}
\textbf{Case 2:} $v_{j}$ and $v_{j+1}$ are support vertices, and $v_{j+2}$ is not.

\vspace{1mm}
Similar to \textbf{Case 1}, let $v$ be other adjacent vertex of $v_{j+2}$ in $C_k$ besides $v_{j+1}$.
Then the equalities $\omega(v_{j+2})=\ell(v_{j})+\ell(v)=g$ and $\ell(v_{j})=g$ give that $\ell(v)=0$.

\vspace{1mm}
In summary, each case above leads to a contradiction.
Hence, we complete the proof of necessity.

On the contrary, assume that each $v_i\in C_k$  is support vertex.
Let $g$ be a non-identity element of $\A$ and $\ell(v_i)=g$ for $1\leq i\leq k$.
For any vertex $v_i\in C_k$, we assume its pendant vertices are $u_1^i$, $u_2^i$, \ldots, $u_{d(v_i)-2}^i$.
By Lemma \ref{lem1}, there exist elements $g_1^i, g_2^i, \ldots, g_{d(v_i)-2}^i\in \A\sm\{0\}$ such that $\sum_{j=1}^{d(v_i)-2}g_j^i=-g$ and then we let $\ell(u_j^i)=g_j^i$.
This gives that $G$ is $\A$-vertex magic with magic constant $g$ under labeling $\ell$, for any abelian group $\A$ with $|\A|\geq 3$.

In view of $G$ contains pendant vertex, $G$ is $Z_2$-vertex magic if and only if the degree of each support vertex is odd.
Therefore, we proved the result.
\end{proof}

The next lemma is the famous Cauchy's theorem in group theory.

\begin{lemma}\label{lem3}
Let $G$ be a finite group and $p$ be a prime. If $p$ divides the order of $G$, then $G$ contains an element of order $p$.
\end{lemma}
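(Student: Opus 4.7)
The plan is to give the short self-contained argument due to J.H. McKay, avoiding both the class equation and any appeal to a structure theorem for abelian groups. First I would introduce the set
\[
S = \{(g_1, g_2, \ldots, g_p) \in G^p : g_1 g_2 \cdots g_p = e\}.
\]
The first $p-1$ coordinates may be chosen arbitrarily in $G$ and the last is then forced to equal $(g_1 g_2 \cdots g_{p-1})^{-1}$, so $|S| = |G|^{p-1}$.

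Next I would define an action of $\mathbb{Z}_p = \langle \sigma \rangle$ on $S$ by cyclic shift, $\sigma \cdot (g_1, g_2, \ldots, g_p) = (g_2, g_3, \ldots, g_p, g_1)$. The one thing requiring verification is that this really sends $S$ to $S$: if $g_1 (g_2 \cdots g_p) = e$ then $g_1 = (g_2 \cdots g_p)^{-1}$, so $(g_2 \cdots g_p) g_1 = e$ as well. Since $|\mathbb{Z}_p| = p$ is prime, the orbit–stabilizer theorem forces every orbit to have size $1$ or $p$, and therefore
\[
|S| \equiv |\mathrm{Fix}(S)| \pmod{p},
\]
where $\mathrm{Fix}(S)$ is the set of tuples left invariant by $\sigma$.

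To finish, observe that a tuple is fixed by $\sigma$ precisely when all its coordinates coincide, so $\mathrm{Fix}(S) = \{(g, g, \ldots, g) : g^p = e\}$. By hypothesis $p \mid |G|$, hence $|S| = |G|^{p-1} \equiv 0 \pmod p$, giving $|\mathrm{Fix}(S)| \equiv 0 \pmod p$. Since $(e, e, \ldots, e) \in \mathrm{Fix}(S)$ the set is non-empty, so $|\mathrm{Fix}(S)| \geq p$, and in particular there exists $g \neq e$ with $g^p = e$. Because $p$ is prime this $g$ has order exactly $p$, as required.

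The only delicate point is confirming that the cyclic shift preserves $S$, which reduces to the identity $xy = e \iff yx = e$ in any group; everything else is routine counting. The rest of the argument is simply the standard dichotomy between fixed and non-fixed orbits under a prime-order action, combined with the divisibility hypothesis.
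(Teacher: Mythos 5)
Your proof is correct and complete: the counting of $S$, the verification that the cyclic shift preserves $S$, the orbit-size dichotomy for a prime-order action, and the final step extracting a nonidentity element of order $p$ are all sound. Note, however, that the paper offers no proof of this statement at all --- it simply records it as ``the famous Cauchy's theorem'' and uses it as a known black box --- so there is nothing in the source to compare against; your self-contained McKay-style argument is the standard elementary proof and would be a perfectly acceptable substitute for the citation.
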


\section{The vertex magicness of unicyclic graphs}

The unique unicyclic graph of diameter $1$ is $C_3$  which is group vertex magic.
Next we consider the unicyclic graphs of diameter $2$ and $3$, respectively.

\begin{theorem}
Let $G$ be a unicyclic graph of diameter $2$.
Then $G$ is group vertex magic if and only if $G$ is $C_4$ or $C_5$.
\end{theorem}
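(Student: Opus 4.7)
The plan is to first enumerate all unicyclic graphs of diameter $2$, and then check group vertex magicness for each candidate. Since the unique cycle $C_k$ of $G$ satisfies $\mathrm{diam}(C_k)=\lfloor k/2\rfloor$ and adding a tree to a cycle cannot decrease distances, we must have $k\le 5$. For $k\in\{4,5\}$ the cycle already achieves diameter $2$, and attaching any pendant at a vertex $v_i\in C_k$ would create distance $3$ to the vertex of $C_k$ farthest from $v_i$; hence $G$ must be exactly $C_4$ or $C_5$. For $k=3$ any attached tree containing a vertex at distance $\ge 2$ from its cycle anchor would create distance $\ge 3$ to the next cycle vertex, so every attached tree must be a star (consisting of pendants only); moreover, pendants at two distinct cycle vertices are themselves at distance $3$ from each other through the cycle, so all pendants must attach at a single cycle vertex. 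Thus the candidates are precisely $C_4$, $C_5$, and $C_3$ together with $t\ge 1$ pendants all attached to one fixed cycle vertex.

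For the sufficiency direction, $C_4$ and $C_5$ are $2$-regular, so Proposition~\ref{pro1} directly gives that they are group vertex magic.

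For the necessity direction, I would rule out the remaining family. Let $C_3=v_1v_2v_3$ with $t\ge 1$ pendants $u_1,\dots,u_t$ attached to $v_1$, and suppose, toward a contradiction, that $\ell$ were an $\A$-vertex magic labeling with magic constant $\mu$ for some abelian group $\A$. Since $v_1$ is a support vertex, evaluating at any pendant gives $\omega(u_1)=\ell(v_1)=\mu$. But then $\omega(v_2)=\ell(v_1)+\ell(v_3)=\mu$ forces $\ell(v_3)=0$, contradicting the requirement $\ell(v_3)\in\A\setminus\{0\}$. Thus no such graph is $\A$-vertex magic for any $\A$, and the characterization follows. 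The only (modest) obstacle lies in the opening enumeration of diameter-$2$ unicyclic graphs; once the three candidate families are isolated, the regularity argument and the one-line contradiction for $C_3$ with pendants are immediate.
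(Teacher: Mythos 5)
Your proof is correct and follows essentially the same route as the paper: enumerate the diameter-$2$ unicyclic graphs as $C_4$, $C_5$, and $C_3$ with pendants at a single vertex, apply Proposition~\ref{pro1} to the two regular cases, and derive $\ell(v_3)=0$ for the third family. The only difference is cosmetic: you spell out the enumeration (which the paper delegates to Figure~\ref{fig1}) and give the contradiction directly rather than citing Lemma~\ref{lem2}, whose Case~1 is exactly your argument.
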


\begin{proof}
The unicyclic graphs of diameter $2$ are $C_4$, $C_5$ and $G_1$ as shown in Figure \ref{fig1}.
By Proposition \ref{pro1} and Lemma \ref{lem2}, $C_4$ and $C_5$  are group vertex magic graphs but
$G_1$ is not.
\begin{figure}[htbp]
\centering
\includegraphics[scale=0.8]{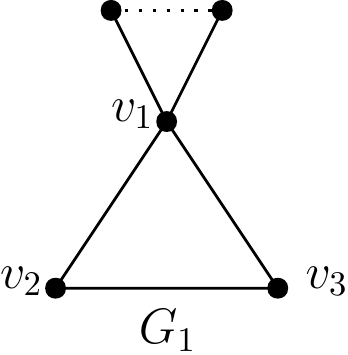}
\caption{}
\label{fig1}
\end{figure}
\end{proof}

As shown in Figure \ref{fig2}, the unicyclic graphs of diameter $3$ are divided into $4$ classes and we will discuss the group vertex magicness of each class of graphs.
\begin{figure}[htbp]
\centering
\includegraphics[scale=0.6]{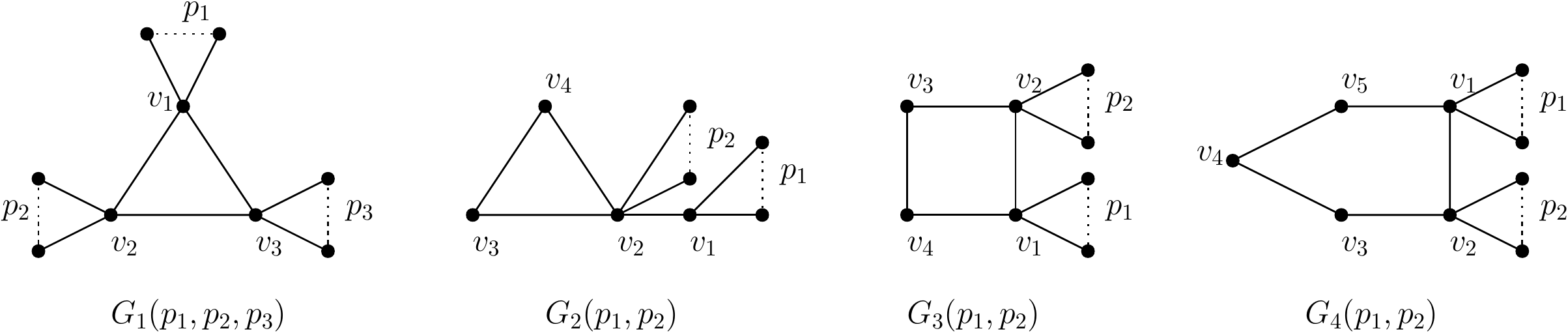}
\caption{The unicyclic graphs of diameter $3$}
\label{fig2}
\end{figure}

\begin{proposition}\label{pro3.2}
Let $\A$ be a finite  abelian group with $|\A|\geq3$.
Then $G_2(p_1,p_2)$ is $\A$-vertex magic if and only if $p_2=0$ and the order of $\A$ is even.
\end{proposition}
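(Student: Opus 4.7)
The plan is to set up the magic-sum equations on $G_2(p_1,p_2)$ and to isolate an identity of the form $2x=0$ which both forces $p_2=0$ (by producing a contradiction when $p_2\geq 1$) and forces $2\mid|\A|$ via Cauchy's theorem. I denote the triangle of $G_2(p_1,p_2)$ by $v_1v_2v_3$, let $u$ be the additional neighbor of $v_1$ that carries the $p_1$ pendants $z_1,\ldots,z_{p_1}$, and let $y_1,\ldots,y_{p_2}$ be the pendants attached directly to $v_1$. Since the class has diameter $3$ we always have $p_1\geq 1$, so $u$ is a support vertex.

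For the necessity direction, note first that the pendants of $u$ force $\ell(u)=\mu$, and in particular $\mu\neq 0$. If $p_2\geq 1$, then the pendants of $v_1$ also force $\ell(v_1)=\mu$; substituting into $\omega(v_2)=\ell(v_1)+\ell(v_3)=\mu$ yields $\ell(v_3)=0$, contradicting the definition of the labeling. Hence $p_2=0$. With $p_2=0$, the identities $\omega(v_2)=\omega(v_3)=\mu$ give $\ell(v_2)=\ell(v_3)$, and then $\omega(v_1)=\ell(v_2)+\ell(v_3)+\ell(u)=2\ell(v_2)+\mu=\mu$ produces the key identity $2\ell(v_2)=0$. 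Since $\ell(v_2)\in\A\setminus\{0\}$, its order is exactly $2$, and Lemma \ref{lem3} forces $|\A|$ to be even.

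For the sufficiency direction, assume $p_2=0$ and $|\A|$ even. Using Cauchy's theorem I pick $h\in\A$ of order $2$, and then pick $\mu\in\A\setminus\{0,h\}$, which is possible since $|\A|\geq 3$. Setting $\ell(u)=\mu$, $\ell(v_2)=\ell(v_3)=h$, and $\ell(v_1)=\mu-h$ (which equals $\mu+h$ because $2h=0$, and which is nonzero precisely because $\mu\neq h$), together with labels $\ell(z_1),\ldots,\ell(z_{p_1})\in\A\setminus\{0\}$ summing to $h$ (available from Lemma \ref{lem1}), will yield $\omega(w)=\mu$ at every vertex $w$, as a short term-by-term verification shows.

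The main conceptual step is extracting the identity $2\ell(v_2)=0$ from $\omega(v_1)=\mu$; this is the new ingredient relative to the tree results of Kamatchi et al. and Sabeel--Paramasivam, and it is what couples the existence of an $\A$-vertex magic labeling to the parity of $|\A|$. All remaining reasoning (the pendant constraint $\ell(u)=\mu$, the short contradiction for $p_2\geq 1$, and the verification of the proposed labels) is mechanical.
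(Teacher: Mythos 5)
Your proof is correct and follows essentially the same route as the paper's: force $\ell(u)=\mu$ from the pendant vertices, rule out $p_2\geq 1$ via the degree-two triangle vertex, extract the involution condition $2\ell(v_2)=0$ from the magic sums, and exhibit the same explicit labeling ($\mu$ on the support vertex, an involution $h$ on the two far triangle vertices, $\mu-h$ on the remaining one) for sufficiency. One small nit: deducing $2\mid|\A|$ from the existence of an element of order $2$ is Lagrange's theorem, not Cauchy's (Lemma~\ref{lem3}); Cauchy is what you correctly invoke in the converse direction.
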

\begin{proof}
Suppose that $G_2(p_1,p_2)$ is $\A$-vertex magic with magic constant $g\neq 0$ under mapping $\ell$.
As the diameter of $G_2(p_1,p_2)$ is $3$, $p_1\neq0$ and then $\ell(v_1)=g$.
If $p_2\neq0$, then $\ell(v_2)=g$.
Since $\omega(v_3)=\ell(v_2)+\ell(v_4)=g$, we obtain that $\ell(v_4)=0$, which is a contradiction.
Thus, $p_2=0$.
According to
\begin{align*}
\omega(v_2)=g=\ell(v_1)+\ell(v_3)+\ell(v_4)=g+\ell(v_3)+\ell(v_4),
\end{align*}
we have $\ell(v_3)=-\ell(v_4)$.
Thus, $\ell(v_2)=\omega(v_3)-\ell(v_4)=g+\ell(v_3)$.
As a result, $\omega(v_4)=\ell(v_2)+\ell(v_3)=g+2\ell(v_3)=g$, which follows that $\ell(v_3)$ is an involution of $\A$.
Therefore, the order of $\A$ is even.

Conversely, if $p_2=0$ and the order of $\A$ is even.
Let $h$ be an involution of $\A$.
Choose $g\in \A\setminus\{0,h\}$.
Define $\ell(v_1)=g$, $\ell(v_2)=g-h$ and $\ell(v_3)=\ell(v_4)=h$.
Assume the pendant vertices adjacent to $v_1$ is $u_1$, \ldots, $u_{d(v_1)-1}$.
By Lemma \ref{lem1}, there exist $g_1, g_2, \ldots, g_{d(v_1)-1}\in \A\setminus\{0\}$ such that $\sum_{j=1}^{d(v_1)-1}g_j=h$.
Then we define $\ell(u_j)=g_j$, for $1\leq j\leq d(v_1)-1$.
It is easy to check that $\ell$ is $\A$-vertex magic labeling of $G_2(p_1, 0)$ and the magic constant is $g$.
\end{proof}

Observe that  $G_2(p_1,p_2)$, $G_3(p_1, p_2)$ and $G_4(p_1, p_2)$ are not $Z_2$-vertex magic.
By Lemma \ref{lem2}, the characterization of group vertex magic unicyclic graphs with diameter $3$ is given.
\begin{theorem}
Let $G$ be a unicyclic graph of diameter $3$.
Then $G$ is group-vertex magic graph if and only if $G=G_1(p_1, p_2, p_3)$, where $p_i\geq 1$ and $p_i$ is odd, for each $1\leq i\leq3$.
\end{theorem}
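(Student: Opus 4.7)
The plan is to reduce the theorem to the two pillars already established: Lemma \ref{lem2} for the generalized‑sun family $G_1$, and the parity observation plus Proposition \ref{pro3.2} for the other three families.

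First I would dispatch sufficiency. From Figure \ref{fig2}, $G_1(p_1,p_2,p_3)$ is the generalized sun obtained from the triangle $C_3=v_1v_2v_3$ by attaching $p_i$ pendant vertices at $v_i$. Under the hypothesis that each $p_i\ge 1$ is odd, every cycle vertex $v_i$ is a support vertex of degree $p_i+2$, which is odd. Lemma \ref{lem2} (second assertion) then gives that $G$ is group vertex magic, so the sufficiency follows at once.

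For necessity I would proceed by eliminating every other possibility in two steps. Step one handles the non‑$G_1$ classes: by the observation preceding the theorem, each of $G_2(p_1,p_2)$, $G_3(p_1,p_2)$, $G_4(p_1,p_2)$ contains pendant vertices (of degree $1$) together with vertices of even degree, so they fail the $\mathbb{Z}_2$‑parity test and hence are not group vertex magic. (Proposition \ref{pro3.2} independently confirms $G_2$ is not $\A$‑vertex magic for any $\A$ with $|\A|\ge3$ unless $p_2=0$ and $|\A|$ is even, which is not ``for every nontrivial abelian group''.) Step two handles $G=G_1(p_1,p_2,p_3)$ with the wrong parameters. If some $p_i=0$, then $v_i\in C_3$ is a non‑pendant vertex that is not a support vertex, so the necessity half of Lemma \ref{lem2} shows $G$ is not $\A$‑vertex magic for any $|\A|\ge 3$. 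If instead all $p_i\ge1$ but some $p_i$ is even, then $\deg(v_i)=p_i+2$ is even while the pendants have degree $1$; the degrees are not of a common parity, so $G$ fails to be $\mathbb{Z}_2$‑vertex magic.

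Combining both sufficiency and necessity yields the claimed equivalence. The proof is essentially a bookkeeping exercise on top of Lemma \ref{lem2} and the observation on $\mathbb{Z}_2$‑magicness; the only mildly delicate point is verifying that the degenerate parameter choices (e.g.\ exactly one $p_i=0$ in $G_1$) still give diameter‑$3$ unicyclic graphs and therefore must be treated, rather than silently excluded — but this is immediate since two pendants attached at distinct vertices of $C_3$ are already at distance $3$. I do not anticipate any genuine obstacle beyond cleanly enumerating the cases produced by Figure \ref{fig2}.
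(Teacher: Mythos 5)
Your proposal is correct and follows essentially the same route as the paper, which disposes of $G_2$, $G_3$, $G_4$ by the $\mathbb{Z}_2$-parity observation and settles $G_1(p_1,p_2,p_3)$ via Lemma \ref{lem2} on generalized suns. You simply spell out the case bookkeeping (some $p_i=0$ versus some $p_i$ even) that the paper leaves implicit.
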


Now we consider the group vertex magic unicyclic graphs of diameter $4$ which are shown in Figure \ref{fig3}.
\begin{figure}[t]
\centering
\includegraphics[scale=0.8]{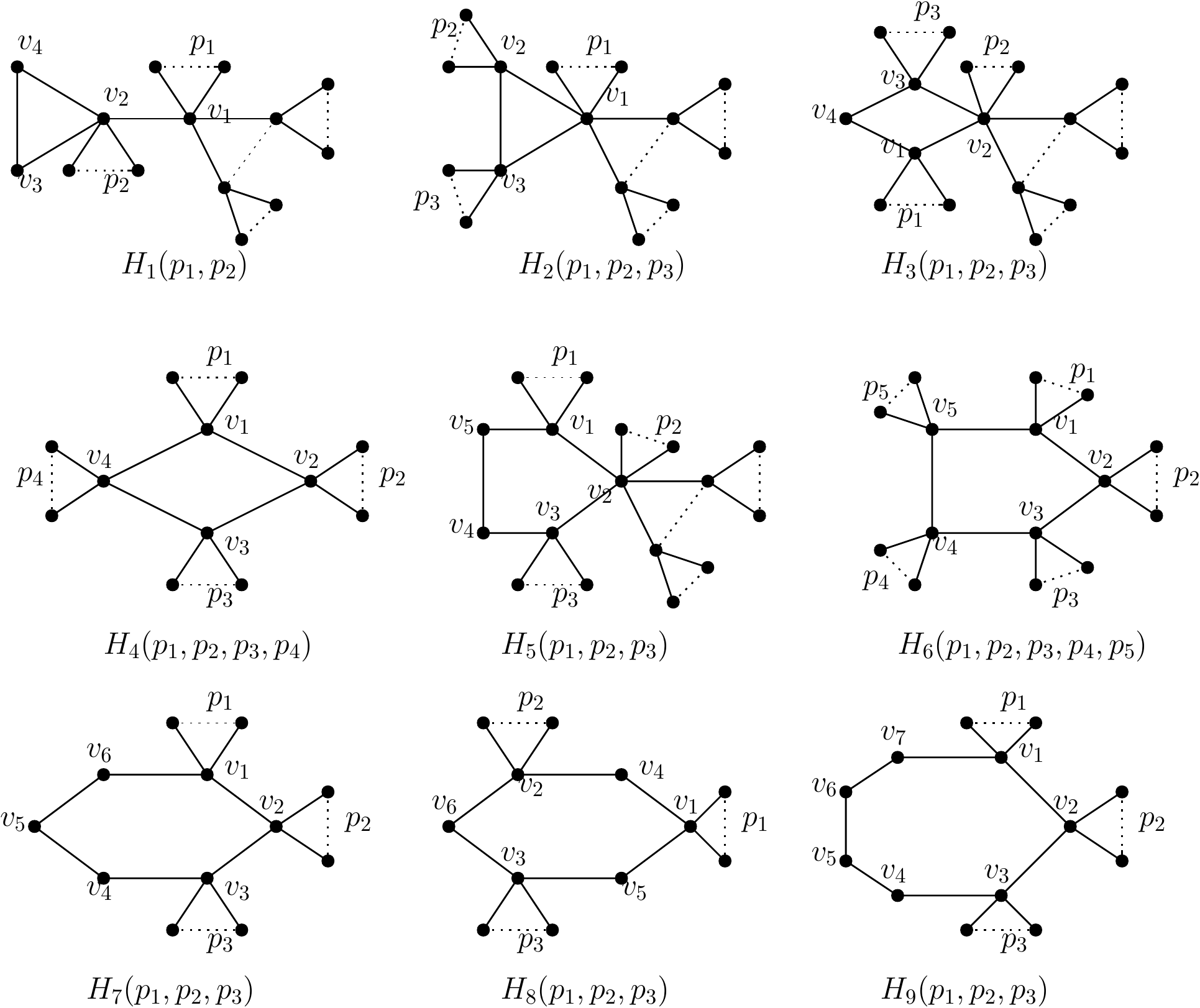}
\caption{The unicyclic graphs of diameter $4$}
\label{fig3}
\end{figure}

\begin{proposition}\label{pro3.4}
Let $\A$ be a finite abelian group with $|\A|\geq3$.
Then $H_1(p_1,p_2)$ with $p_1=0$ is  $\A$-vertex magic if and only if  $p_2=0$, and one of the following conditions holds:
\begin{enumerate}[{\rm(i)}]
\item if $v_1$ has a weak support neighbor, then there exists $g\in \A\setminus\{0\}$ such that $d(v_1)\not\equiv1 (mod\ o(g))$, $d(v_1)\not\equiv2(mod\ o(g))$, $2d(v_1)\not\equiv 2 (mod\ o(g))$ and $2d(v_1)\not\equiv 3 (mod\ o(g))$;
\item if each support neighbor of $v_1$ is strong support vertex, then there exists $g\in \A\setminus\{0\}$ such that $d(v_1)\not\equiv1 (mod\ o(g))$, $d(v_1)\not\equiv2(mod\ o(g))$ and $2d(v_1)\not\equiv 3 (mod\ o(g))$.
\end{enumerate}
\end{proposition}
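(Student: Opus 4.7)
The plan is to prove both directions, with the modular constraints arising from forcing every label to be nonzero in $\A$.

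For necessity, suppose $H_1(0,p_2)$ is $\A$-vertex magic under a labeling $\ell$ with magic constant $g$. Since the graph contains pendants, $g\neq 0$, and evaluating $\omega$ at any pendant of a support vertex $s$ yields $\ell(s)=g$. I would first show $p_2=0$: if $p_2\geq 1$ then the vertex carrying these pendants is a support vertex with label $g$, and propagating this along the unique cycle (exactly as in Proposition \ref{pro3.2}) one reaches a non-support cycle vertex whose magic equation forces an adjacent label to be $0$, a contradiction.

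Next, assuming $p_1=p_2=0$, I would write down the magic equations at $v_1$, at each cycle-neighbor of $v_1$, and at each pendant of a support-neighbor of $v_1$. At $v_1$ the support-neighbors contribute $sg$ (where $s$ is their number) and the two cycle-neighbors of $v_1$ contribute the remaining labels. Solving the resulting small linear system over $\A$ expresses every label as an explicit integer multiple of $g$ determined by $d(v_1)$. Since $kg=0$ iff $o(g)\mid k$, each required nonzero label translates into one of the listed congruence inequations: $d(v_1)\not\equiv 1$ and $d(v_1)\not\equiv 2 \pmod{o(g)}$ will come from the labels of the two cycle-neighbors of $v_1$, while $2d(v_1)\not\equiv 3 \pmod{o(g)}$ will come from another forced label further out. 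The extra inequation $2d(v_1)\not\equiv 2\pmod{o(g)}$ present only in case (i) corresponds to the rigidly determined label of the unique pendant of a weak support-neighbor; in case (ii) every pendant of a support-neighbor of $v_1$ lies on a strong support, so by Lemma \ref{lem1} it may be chosen nonzero and this congruence becomes unnecessary.

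For sufficiency, choose $g\in\A\setminus\{0\}$ satisfying the applicable list of inequations. Label every support-neighbor of $v_1$ by $g$, the two cycle-neighbors of $v_1$ by the forced values computed in the necessity analysis (nonzero thanks to $d(v_1)\not\equiv 1,2\pmod{o(g)}$), the pendant of each weak support-neighbor by its forced value (nonzero thanks to the remaining inequations in (i)), and use Lemma \ref{lem1} to label the pendants of each strong support-neighbor so that their sum is the prescribed value while each lies in $\A\setminus\{0\}$. A direct check of $\omega(v)=g$ at every vertex then finishes the proof.

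The main obstacle I expect is the bookkeeping: pairing each of the four congruences with the specific vertex whose label nonvanishing it encodes, and verifying that in case (ii) the congruence $2d(v_1)\not\equiv 2\pmod{o(g)}$ is genuinely superfluous because the corresponding label, belonging to a strong support's pendant, is a free parameter adjustable by Lemma \ref{lem1}. Once this correspondence between vertices and congruences is made explicit, the remaining verification is a routine linear computation over $\A$.
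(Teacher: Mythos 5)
Your proposal follows essentially the same route as the paper: rule out $p_2\neq0$ by forcing a zero label on the cycle, solve the magic equations to express every remaining label as an explicit integer multiple of $g$ (namely $(2-d(v_1))g$, $(d(v_1)-1)g$, $(3-2d(v_1))g$, and $(2d(v_1)-2)g$ for the weak support's pendant), translate their nonvanishing into the stated congruences, and reverse the computation for sufficiency using Lemma \ref{lem1} on the strong supports' pendants. Your attribution of the congruences $d(v_1)\not\equiv1,2$ to ``the two cycle-neighbors of $v_1$'' is slightly off (in $H_1$ the vertex $v_1$ has a single cycle-neighbor $v_2$, and the other two forced labels sit on the remaining triangle vertices $v_3,v_4$), but this is a figure-dependent detail that does not affect the argument.
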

\begin{proof}
Suppose that $H_1(0,p_2)$ is  $\A$-vertex magic with magic constant $g\neq 0$ under mapping $\ell$.
If $p_2\neq0$, then $\ell(v_2)=g$.
Thus, $\omega(v_3)=\ell(v_2)+\ell(v_4)=g+\ell(v_4)=g$, which yields that $\ell(v_4)=0$.
This leads to a contradiction.
Hence, $p_2=0$.
Since $\omega(v_1)=\ell(v_2)+(d(v_1)-1)g=g$, we obtain $\ell(v_2)=(2-d(v_1))g$.
Thus, $\ell(v_3)=\omega(v_4)-\ell(v_2)=(d(v_1)-1)g$ and $\ell(v_4)=\omega(v_3)-\ell(v_2)=(d(v_1)-1)g$ follows.
Further, we arrive at $\ell(v_1)=\omega(v_2)-2\ell(v_3)=(3-2d(v_1))g$.
Since $\ell$ is a mapping from $V$ to $\A\setminus\{0\}$, $(2-d(v_1))g$, $(d(v_1)-1)g$ and $(3-2d(v_1))g$ are all not $0$.
Moreover, if $v_1$ has weak support neighbor $x$ and let $y$ be the unique pendant neighbor of $x$.
Then $\ell(y)=\omega(x)-\ell(v_1)=(2d(v_1)-2)g\neq 0$ holds.

Conversely, for $H_2(0,0)$, if it satisfies (i) or (ii), we label all support vertices with non-identity element $g$ and, let $\ell(v_1)=(3-2d(v_1))g$, $\ell(v_2)=(2-d(v_1))g$ and $\ell(v_3)=\ell(v_4)=(d(v_1)-1)g$.
For the pendant neighbors of any strong support vertex, by Lemma \ref{lem1}, we can label them such that their label sum is $(2d(v_1)-2)g$.
Especially, if $v_1$ has weak support neighbor $x$, let $y$ be the unique pendant neighbor of $x$, let $\ell(y)=(2d(v_1)-2)g$.
It can be checked that  $H_2(0,0)$ is a $\A$-vertex magic graph with magic constant $g$ under labeling $\ell$.
\end{proof}

\begin{proposition}\label{pro3.5}
Let $\A$ be a finite abelian group with $|\A|\geq3$.
Then $H_1(p_1,p_2)$ with $p_1\neq0$ is  $\A$-vertex magic if and only if $p_2=0$, and one of the following condition holds:
\begin{enumerate}[{\rm(i)}]
\item if $p_1=1$, then each support neighbor of $v_1$ is strong support vertex and there exist an involution $h$ of $\A$ and $g\in\A\setminus\{0,h\}$ such that $h\neq (d(v_1)-2)g$;
\item if $p_1\geq2$, then each support neighbor of $v_1$ is strong support vertex and the order of $\A$ is even.
\end{enumerate}
\end{proposition}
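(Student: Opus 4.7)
The plan is to mirror closely the argument of Proposition \ref{pro3.4}, now taking into account that the extra hypothesis $p_1\neq 0$ forces $v_1$ itself to be a support vertex, hence $\ell(v_1)=g$. First I would dispose of $p_2$ by the same pendant-cascade argument: if $p_2\neq 0$ then $v_2$ is a support vertex so $\ell(v_2)=g$, and then $\omega(v_3)=\ell(v_2)+\ell(v_4)=g$ forces $\ell(v_4)=0$, a contradiction. So $p_2=0$ in any $\A$-vertex magic labeling.

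Next I would extract the constraints on the triangle $v_2v_3v_4$. Using $\ell(v_1)=g$, the equation $\omega(v_2)=g+\ell(v_3)+\ell(v_4)=g$ gives $\ell(v_3)+\ell(v_4)=0$, while $\omega(v_3)=\omega(v_4)=g$ yields $\ell(v_3)=\ell(v_4)$. Combining, $\ell(v_3)=\ell(v_4)=h$ where $h$ is an involution, and $\ell(v_2)=g-h$. So $h\neq 0$, $g\neq h$, and $\A$ must contain an involution; by Cauchy's theorem (Lemma \ref{lem3}) this is equivalent to $|\A|$ being even. I would then analyze the support neighbors of $v_1$: each such neighbor $w$ is labeled $g$, so $\omega(w)=g+\sum_{u\text{ pendant of }w}\ell(u)=g$ forces the pendants of $w$ to sum to $0$. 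For a weak support neighbor this would mean the unique pendant has label $0$, impossible; hence every support neighbor of $v_1$ must be strong, and for a strong support neighbor one can always realize the sum $0$ by two nonzero opposite elements.

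To separate cases (i) and (ii) I would look at the magic equation at $v_1$ itself. Writing $s$ for the number of support-vertex neighbors of $v_1$, one has $\omega(v_1)=(g-h)+sg+(\text{sum of the }p_1\text{ pendants of }v_1)=g$, so the pendants of $v_1$ must have labels summing to $h-sg$. When $p_1=1$ this single pendant label is $h-sg$, which must be nonzero; since $d(v_1)=2+s$, this is exactly $h\neq (d(v_1)-2)g$, yielding condition (i). When $p_1\geq 2$, Lemma \ref{lem1} lets us realize $h-sg$ as a sum of $p_1$ nonzero elements without any extra restriction, so only the involution (hence even order) requirement remains, yielding (ii).

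For sufficiency I would run the construction in reverse: pick an involution $h$ and an element $g\in\A\setminus\{0,h\}$ (which exists whenever $|\A|\geq 4$, guaranteed by $|\A|\geq 3$ and $|\A|$ even) satisfying the case condition, set $\ell(v_1)=g$, $\ell(v_2)=g-h$, $\ell(v_3)=\ell(v_4)=h$, label every support-vertex neighbor of $v_1$ with $g$, distribute the pendants of each strong support neighbor via Lemma \ref{lem1} to sum to $0$, and the pendants of $v_1$ to sum to $h-sg$ (using Lemma \ref{lem1} in case (ii) and the single valid nonzero value in case (i)), then check $\omega(\cdot)=g$ at every vertex. The only real subtlety is the split between $p_1=1$ and $p_1\geq 2$: for $p_1=1$ the single pendant's label is dictated and must be nonzero, which is the extra constraint $h\neq (d(v_1)-2)g$ that distinguishes (i) from (ii).
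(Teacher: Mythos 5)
Your proposal is correct and follows essentially the same route as the paper's proof: the same pendant-cascade argument gives $p_2=0$, the same use of $\ell(v_1)=g$ rules out weak support neighbors of $v_1$, the same triangle equations $\omega(v_2)=\omega(v_3)=\omega(v_4)=g$ force $\ell(v_3)=\ell(v_4)=h$ to be an involution and $\ell(v_2)=g-h$, and the same bookkeeping at $v_1$ (your $h-sg$ is the paper's $(1+p_1-d(v_1))g+h$) separates cases (i) and (ii), with the identical construction for sufficiency. No gaps.
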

\begin{proof}
Suppose $H_1(p_1,p_2)$ with $p_1\neq0$ is a $\A$-vertex magic graph with magic constant $g$ under $\ell$.
Using the same argument as in the proof of Proposition \ref{pro3.4}, we can easily obtain  $p_2=0$.
We assert that each support neighbor of $v_1$ is strong support vertex.
If the assertion not true, then there exists a weak support neighbor $x$ of $v_1$, and let $y$ be the unique pendant neighbor of $x$.
Then $\ell(y)=\omega(x)-\ell(v_1)=0$, which is a contradiction.
So, our assertion is follows.
From $\omega(v_3)=\omega(v_4)$, we obtain that $\ell(v_3)=\ell(v_4)$.
Thus, $\omega(v_2)=\ell(v_1)+\ell(v_3)+\ell(v_4)=g+2\ell(v_3)=g$, which indicates $\ell(v_3)$ is an involution of $\A$.
Assume $\ell(v_3)=h$, where $h$ is an involution of $\A$.
Then $\ell(v_2)=\omega(v_4)-\ell(v_3)=g-h\neq0$.
Especially, if $p_1=1$, let $v$ be the unique pendant neighbor of $v_1$, then we have $\ell(v)=\omega(v_1)-\ell(v_2)-(d(v_1)-2)g=(2-d(v_1))g+h\neq0$.

Conversely, suppose $H_1(p_1,0)$ and $\A$ satisfy  (i) or (ii).
Let $h$ be an involution of $\A$ and $g\in\A\setminus \{0,h\}$.
We label all support vertices with $g$ and, let $\ell(v_3)=\ell(v_4)=h$ and $\ell(v_2)=g-h$.
For any strong support neighbor $v$ of $v_1$, by Lemma \ref{lem1}, we can label the pendant neighbors of $v$ such that their label sum is $0$.
In addition, label the pendant neighbors of $v_1$ such that their label sum is $(1+p_1-d(v_1))g+h$.
Then, $H_1(p_1,0)$ is a $\A$-vertex magic graph with magic constant $g$ under the labeling $\ell$.
\end{proof}

It is clear that $H_1(p_1,p_2)$ is not $Z_2$-vertex magic.
Hence, we have
\begin{corollary}
$H_1(p_1,p_2)$ is not group vertex magic.
\end{corollary}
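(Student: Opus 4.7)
The plan is immediate and leans on the remark that directly precedes the corollary. Group vertex magicness requires an $\A$-vertex magic labeling to exist for \emph{every} nontrivial abelian group $\A$, so to defeat it I only need to produce a single group for which no such labeling exists; the natural choice is $\mathbb{Z}_2$.

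To show that $H_1(p_1,p_2)$ is not $\mathbb{Z}_2$-vertex magic, I would invoke the parity criterion stated just before Lemma \ref{lem2}: a graph $G$ is $\mathbb{Z}_2$-vertex magic if and only if all vertices of $G$ have degrees of the same parity. In $H_1(p_1,p_2)$, the triangle $v_2 v_3 v_4$ visible in Figure \ref{fig3} forces $v_3$ and $v_4$ to have degree exactly $2$ (even), regardless of the choice of $p_1,p_2$. On the other hand, since $\mathrm{diam}(H_1)=4$, the vertex $v_1$ must possess a support neighbor carrying at least one pendant vertex (otherwise the eccentricity from the triangle side cannot reach $4$), and that pendant vertex has degree $1$ (odd). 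So $H_1(p_1,p_2)$ always contains vertices of both parities and the criterion fails.

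Putting the two observations together finishes the proof: the failure of $\mathbb{Z}_2$-vertex magicness is an absolute obstruction to being group vertex magic. The only point that demands a moment's care is the structural claim that $v_3,v_4$ are never decorated with extra pendants in the family $H_1(p_1,p_2)$, so that their degree really equals $2$ for every admissible $(p_1,p_2)$; this is built into the definition, since $p_1$ and $p_2$ record pendant counts only at $v_1$ and $v_2$, respectively. I therefore expect no real obstacle in the proof; the argument is a one-line appeal to the parity criterion after the propositions have done the heavy lifting for $|\A|\geq 3$.
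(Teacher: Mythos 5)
Your proposal is correct and follows exactly the paper's (unstated) reasoning: the paper simply remarks that $H_1(p_1,p_2)$ is never $\mathbb{Z}_2$-vertex magic and concludes, while you supply the parity details ($v_3,v_4$ have even degree $2$ and the diameter-$4$ condition forces a pendant, hence odd-degree, vertex). No gap; this is the same argument with the ``it is clear'' step written out.
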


\begin{proposition}\label{pro3.7}
Let $\A$ be a finite abelian group with $|\A|\geq3$.
Then $H_2(p_1,p_2,p_3)$ is $\A$-vertex magic if and only if both $p_2$ and $p_3$ are non-zero and, one of the following condition holds:
\begin{enumerate}[{\rm(i)}]
\item $p_1=0$ and $gcd(d(v_1)-1,|\A|)\neq 1$;
\item $p_1=1$ and except for $v_2$ and $v_3$, all other support neighbors of $v_1$ are strong support and $d(v_1)\not\equiv 2(mod\ e(\A))$;
\item except for $v_2$ and $v_3$, all other non-pendant vertices are strong support.
\end{enumerate}
\end{proposition}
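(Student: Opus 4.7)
The plan is to reuse the vertex-equation bookkeeping from Propositions \ref{pro3.4} and \ref{pro3.5}. Assume $H_2(p_1,p_2,p_3)$ admits an $\A$-vertex magic labeling $\ell$ with magic constant $g\neq 0$. I first show $p_2\geq 1$ and $p_3\geq 1$: if $p_3=0$, then $v_3$ is not a support vertex, and combining $\omega(v_3)=g$ with the forced label $\ell(v_2)=g$ (assuming $p_2\geq 1$, or the analogous equation at the other cycle-neighbor of $v_3$ otherwise) pins some mandatory label to $0$, contradicting $\ell:V\to\A\sm\{0\}$; the case $p_2=0$ is symmetric. Hence $\ell(v_2)=\ell(v_3)=g$.

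The decisive observation concerns $v_1$. Whenever $p_1\geq 1$, the vertex $v_1$ is itself a support vertex, so $\ell(v_1)=g$. Then, for every support neighbor $u$ of $v_1$ other than $v_2,v_3$, the equation $\omega(u)=g$ forces the pendants of $u$ to sum to $g-\ell(v_1)=0$; a weak support vertex $u$ would have its unique pendant labeled $0$, impossible. This is exactly the strong-support clause in (ii) and (iii). Finally, $\omega(v_1)=g$ together with the labels $g$ at $v_2,v_3$ and at each of the $s$ support neighbors yields that the pendants of $v_1$ sum to $-(d(v_1)-1-p_1)g$.

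The three cases arise from this last relation according to $p_1$. When $p_1=0$ the pendant sum at $v_1$ is empty, so $(d(v_1)-1)g=0$; combined with $g\neq 0$, Lemma \ref{lem3} yields that such a $g$ exists if and only if $\gcd(d(v_1)-1,|\A|)\neq 1$, giving (i). Here no strong-support clause appears because $\ell(v_1)$ is free and can be chosen in $\A\sm\{0,g\}$, which suffices since $|\A|\geq 3$. When $p_1=1$ the unique pendant of $v_1$ carries the label $-(d(v_1)-2)g$, which must be non-zero; by the definition of the exponent, existence of a non-zero $g$ with $(d(v_1)-2)g\neq 0$ is equivalent to $d(v_1)\not\equiv 2\pmod{e(\A)}$, giving (ii). When $p_1\geq 2$, Lemma \ref{lem1} realizes any prescribed pendant sum over $\geq 2$ non-zero labels, so no numerical condition arises and only the strong-support structural condition remains, giving (iii).

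For sufficiency I build $\ell$ explicitly in each case: assign $g$ to every support vertex (including $v_1$ when $p_1\geq 1$); in case (i) pick $\ell(v_1)\in\A\sm\{0,g\}$; then apply Lemma \ref{lem1} to populate each pendant bundle with non-zero labels summing to the prescribed value (namely $-\ell(v_1)$ at $v_2$ and $v_3$, $g-\ell(v_1)$ at each other support neighbor of $v_1$, and $-(d(v_1)-1-p_1)g$ at $v_1$). Verification that $\omega(v)=g$ at every vertex is routine. The main obstacle I anticipate is the clean exclusion of $p_2=0$ or $p_3=0$ in the necessity direction, since this does not collapse to a single $\omega$-equation but requires chaining several local equations through the cycle structure together with the diameter-$4$ constraint.
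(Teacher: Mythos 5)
Your argument is correct and follows essentially the same route as the paper's: force $\ell(v_2)=\ell(v_3)=g$, derive the strong-support requirement from $\ell(v_1)=g$ when $p_1\geq 1$, read off the numerical conditions from the pendant-sum at $v_1$ (Cauchy's theorem for $p_1=0$, the exponent $e(\A)$ for $p_1=1$, Lemma \ref{lem1} for $p_1\geq 2$), and build the labeling explicitly for sufficiency. The only point you flag as an obstacle — ruling out $p_2=p_3=0$ — is dispatched in the paper in one line by the diameter-$4$ hypothesis rather than by chaining $\omega$-equations, after which the case of exactly one of $p_2,p_3$ vanishing is the single-equation contradiction you describe.
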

\begin{proof}
Suppose that $H_2(p_1,p_2,p_3)$ is  $\A$-vertex magic with magic constant $g\neq 0$ under mapping $\ell$.
Firstly, notice that $p_2$ and $p_3$ cannot all be $0$, since the diameter of $H_2(p_1,p_2,p_3)$ is $4$.
Without loss of generality, assume $p_2=0$ and $p_3\neq 0$, the equality $\omega(v_2)=\ell(v_1)+\ell(v_3)=\ell(v_1)+g=g$ gives that $\ell(v_1)=0$  which is a contradiction.
Hence, both $p_2$ and $p_3$ are non-zero.

If $p_1=0$, then $\omega(v_1)=d(v_1)g=g$, it follows that $(d(v_1)-1)g=0$ and so $o(g)$ is a common divisor of  $d(v_1)-1$ and $|\A|$.
Next the condition of $p_1\geq 1$ is discussed.
We point out that, except for $v_2$ and $v_3$, each support neighbor of $v_1$ is strong support.
In fact, if $x$ is a weak support neighbor of $v_1$, where $x\neq v_2$ and $x\neq v_3$,
and $y$ is the unique pendant vertex adjacent to $x$.
Then $\ell(y)=\omega(x)-\ell(v_1)=0$, which is a contradiction.
Especially, if $p_1=1$ and assume the unique pendant neighbor of $v_1$ is $z$.
Observe that $\ell(z)=\omega(v_1)-(d(v_1)-1)g=(2-d(v_1))g\neq0$, so $d(v_1)\not\equiv 2(mod\ e(\A))$.

Conversely, consider $H_2(p_1,p_2,p_3)$ with $p_2\neq 0$ and $p_3\neq 0$.
If $H_2(p_1,p_2,p_3)$ satisfies (i), we let $gcd(d(v_1)-1,|\A|)=m\neq1$ and $p$ be a prime divisor of $m$.
By Cauchy's theorem, $\A$ has an element $g$ of order $p$.
Hence, $o(g)$ divides $d(v_1)-1$ and then $d(v_1)g=g$.
Choose element $h\in \A\setminus\{0,g\}$ and we define $\ell(v)=g$ for each support vertex $v$, and $\ell(v_1)=h$.
For the pendant neighbors of $v_i$, where $i=2$ or $3$, by Lemma \ref{lem1}, we can label them  such that their label sum is $-h$.
In addition, for any support neighbor $u$ of $v_1$ except for $v_2$ and $v_3$, we label the pendant neighbors of $u$ such that their label sum is $g-h$.
This gives an $\A$-vertex labeling of  $H_2(0,p_2,p_3)$.

If $H_2(p_1,p_2,p_3)$ satisfies (ii), there exists element $g\in\A\setminus\{0\}$ such that $(2-d(v_1))g\neq 0$.
Let $u$ be the unique pendant neighbor of $v_1$.
Define $\ell(u)=(2-d(v_1))g$ and $\ell(v)=g$, for each support vertex $v$.
For pendant neighbors of $v_i$, where $i=2$ or $3$, by Lemma \ref{lem1} again, we can label them such that their sum is $-g$.
As regard to the pendant neighbors of other support vertex $v$, except for $v_2$ and $v_3$, we label them such that their sum is $0$.
Then $H_2(1,p_2,p_3)$ is $\A$-vertex magic graph under mapping $\ell$.

If $H_2(p_1,p_2,p_3)$ satisfies (iii), let $g\in \A\setminus\{0\}$ and we label all support vertices with $g$.
Except for $v_1$, $v_2$ and $v_3$, for any other support vertex $v$, we label the pendant neighbors of $v$ such that their label sum is $0$.
For $i=2$ or $3$, label the pendant neighbors of $v_i$ such that their label sum is $-g$.
At last, label the pendant neighbors of $v_1$ such that their label sum is $(p_1+1-d(v_1))g$.
Clearly, $\ell$ is an $\A$-vertex magic labeling of $H_2(p_1,p_2,p_3)$ with magic constant $g$.
\end{proof}

\begin{corollary}\label{cor3.8}
$H_2(p_1,p_2,p_3)$ is group vertex magic if and only if  $v_2$ and $v_3$ are odd support and, except for $v_2$ and $v_3$, all  other non-pendant vertiecs are odd strong support.
\end{corollary}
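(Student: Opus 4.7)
My plan is to derive the corollary from Proposition \ref{pro3.7} by adjoining the $\mathbb{Z}_2$-vertex magic condition (which Proposition \ref{pro3.7} does not cover, since it assumes $|\A|\geq 3$).

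For the necessity direction, I would suppose $H_2(p_1,p_2,p_3)$ is group vertex magic and argue in two steps. First, specializing to $\A=\mathbb{Z}_2$: since the graph has pendant vertices (as $p_2,p_3\neq 0$ by Proposition \ref{pro3.7}), every vertex must have odd degree, which already yields the ``odd'' half of the conclusion. Second, for every finite abelian $\A$ with $|\A|\geq 3$, Proposition \ref{pro3.7} furnishes one of conditions (i), (ii), (iii); I would then show that only (iii), which is purely structural, can hold uniformly across all such $\A$. To rule out (i), which forces $p_1=0$, I would test with $\A=\mathbb{Z}_p$ for a prime $p$ coprime to $d(v_1)-1$ (infinitely many such primes exist), producing $\gcd(d(v_1)-1,|\A|)=1$ in contradiction to (i). To rule out (ii), which forces $p_1=1$, I would take $\A=\mathbb{Z}_{d(v_1)-2}$ when $d(v_1)-2\geq 3$, or $\A=\mathbb{Z}_2\oplus\mathbb{Z}_2$ when $d(v_1)-2=2$, so that $e(\A)\mid d(v_1)-2$, contradicting the modular condition in (ii). Hence (iii) must hold, and combining with the odd-degree conclusion gives precisely the stated characterization.

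For the sufficiency direction, the stated hypothesis directly implies condition (iii) of Proposition \ref{pro3.7}, yielding $\A$-vertex magicness for every $\A$ with $|\A|\geq 3$, while the uniform odd-degree assumption (every non-pendant vertex has odd degree, pendants have degree $1$) delivers $\mathbb{Z}_2$-vertex magicness as well, so $H_2(p_1,p_2,p_3)$ is group vertex magic. The hard part will be the test-group selection in the necessity argument, particularly keeping $|\A|\geq 3$ in the $p_1=1$ subcase when $d(v_1)-2$ is small; for $d(v_1)-2=2$ the naive cyclic choice $\mathbb{Z}_{d(v_1)-2}=\mathbb{Z}_2$ is excluded, and one must fall back on the noncyclic group $\mathbb{Z}_2\oplus\mathbb{Z}_2$ to obtain a valid contradiction.
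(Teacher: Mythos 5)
Your proposal is correct and follows essentially the same route as the paper: the paper likewise combines Proposition \ref{pro3.7} with the $\mathbb{Z}_2$ parity condition and kills cases (i) and (ii) by exhibiting test groups (it uses the cyclic group of order $d(v_1)$ against (i) and the cyclic group of order $d(v_1)-2$ against (ii)). Your extra care with the subcase $d(v_1)-2=2$, where you substitute $\mathbb{Z}_2\oplus\mathbb{Z}_2$ to keep $|\A|\geq 3$, is a small but genuine improvement, since the paper's choice $\mathbb{Z}_{d(v_1)-2}$ would there fall outside the hypotheses of Proposition \ref{pro3.7} (though the paper could also have noted that the parity condition forces $d(v_1)$ odd, so $d(v_1)-2\geq 3$).
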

\begin{proof}
$H_2(p_1,p_2,p_3)$ is $Z_2$-vertex magic implies all non-pendant vertices are odd.
Let $\A$ be the cyclic group of order $d(v_1)$, then $gcd(d(v_1)-1,|\A|)=1$.
If $p_1=1$, then $d(v_1)\geq 4$.
Let $\A$ be the cyclic group of order $d(v_1)-2$, then $d(v_1)\equiv 2(mod\ e(\A))$.
Thus, by Proposition \ref{pro3.7}, $H_2(p_1,p_2,p_3)$ is group vertex magic if and only if  $v_2$ and $v_3$ are odd support and, except for $v_2$ and $v_3$, all  other non-pendant vertiecs are odd strong support.
\end{proof}

\begin{proposition}\label{pro3.9}
Let $\A$ be a finite abelian group with $|\A|\geq3$.
Then $H_3(p_1,p_2,p_3)$ is $\A$-vertex magic if and only if $p_1=p_2=p_3=0$ and $gcd(d(v_2)-2,|\A|)\neq 1$.
\end{proposition}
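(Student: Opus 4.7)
The plan is to mirror the two-directional structure used in Propositions \ref{pro3.4}--\ref{pro3.7}: first derive necessary conditions by chasing weight equations around $H_3(p_1,p_2,p_3)$ and the pendants hanging off of it, then realize sufficiency by invoking Cauchy's theorem (Lemma \ref{lem3}) to produce a group element of the right order and applying Lemma \ref{lem1} to fill in the pendant labels.

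For necessity, I would fix an $\A$-vertex magic labeling $\ell$ with constant $g$, observe that $g\neq 0$ because diameter $4$ forces the existence of at least one support vertex, and hence $\ell(v)=g$ at every support vertex. I would then argue that each of $p_1,p_2,p_3$ must vanish by the same type of contradiction used in the previous propositions: if any $p_i\geq 1$ then the corresponding pendant attached at $v_i$, together with the relation $\omega(v_i)=g$ and the fact that its cycle neighbor already carries label $g$, forces a neighboring vertex of $v_i$ to have label $0$. I would also show, via the weak/strong-support trick from Proposition \ref{pro3.7} (a weak-support neighbor $x$ of $v_2$ would give $\ell(y)=\omega(x)-\ell(v_2)=0$ for its pendant $y$), that any non-cycle support-neighbor of $v_2$ must be strong support. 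Once $p_1=p_2=p_3=0$ is established, writing out $\omega(v_2)=g$ with every other non-pendant vertex already labeled $g$ gives an identity of the form $(d(v_2)-2)g=0$ in $\A$; since $g\neq 0$ the order $o(g)$ is a positive divisor of both $d(v_2)-2$ and $|\A|$, whence $\gcd(d(v_2)-2,|\A|)\neq 1$.

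For sufficiency, suppose $p_1=p_2=p_3=0$ and $\gcd(d(v_2)-2,|\A|)=m>1$. Take any prime $p\mid m$, invoke Lemma \ref{lem3} to obtain $g\in \A$ of order $p$, and note that $p\mid d(v_2)-2$ implies $(d(v_2)-2)g=0$. Set $\ell(v)=g$ at every non-pendant vertex of $H_3(0,0,0)$. Then the weights at $v_1,v_3$ and at the support-neighbors of $v_2$ automatically come out correct once the pendants hanging off each strong support vertex have label sum equal to the appropriate element of $\A\setminus\{0\}$ (namely $-g$ or $0$ depending on the vertex). Since the number of pendants at each strong support is $\geq 2$, Lemma \ref{lem1} lets us realize any prescribed sum by nonzero labels, so the construction goes through and yields magic constant $g$.

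The step I expect to be the main obstacle is the necessity half: carefully handling all the support-neighbor configurations around $v_2$ and making sure that no other implicit gcd/involution condition is needed beyond $\gcd(d(v_2)-2,|\A|)\neq 1$. In particular I would need to verify that (unlike Proposition \ref{pro3.5}) no involution hypothesis creeps in, which amounts to checking that the forced label on $v_1$ (or on the pendant hanging at $v_1$ in the $p_1\geq 1$ case) is already automatically nonzero under $p_1=p_2=p_3=0$ and the gcd condition. Once that case analysis is clean, both directions reduce to the same Cauchy-plus-Lemma-\ref{lem1} template used throughout Section 3.
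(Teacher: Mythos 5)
Your proposal has two genuine gaps, one in each direction. In the necessity half you claim, by analogy with Proposition \ref{pro3.7}, that every non-cycle support-neighbor of $v_2$ must be a strong support vertex, via $\ell(y)=\omega(x)-\ell(v_2)=0$. That computation only yields $0$ if $\ell(v_2)=g$; but once $p_2=0$ the vertex $v_2$ has no pendant neighbor, so it is not a support vertex and its label is \emph{not} forced to equal the magic constant. (In Proposition \ref{pro3.7} the trick works precisely because the central vertex there is itself a support vertex in the relevant case.) In fact $\ell(y)=g-\ell(v_2)$ can be nonzero, weak support neighbors of $v_2$ are perfectly admissible, and no such condition appears in the statement you are proving --- so this step would have "proved" a false extra necessary condition and broken the if-and-only-if. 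The rest of your necessity argument (forcing $p_1=p_3=0$ via $\omega(v_4)=\ell(v_1)+\ell(v_3)=g$, then $p_2=0$ via $\omega(v_3)$, then $(d(v_2)-2)g=0$ from $\omega(v_2)$) matches the paper.

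The sufficiency construction is also broken. If you set $\ell(v)=g$ at \emph{every} non-pendant vertex, then $\omega(v_4)=\ell(v_1)+\ell(v_3)=2g$ while each pendant hanging off a support-neighbor of $v_2$ has weight $\ell(\text{its support})=g$; since $g\neq 0$ these cannot both equal the magic constant, regardless of how you choose the pendant sums. The condition $(d(v_2)-2)g=0$ fixes only $\omega(v_2)$, not the degree-$2$ cycle vertices. The paper's construction avoids this by first writing $g=g_1+g_2$ with $g_1,g_2\in\A\setminus\{0\}$ (Lemma \ref{lem1}), setting $\ell(v_1)=\ell(v_2)=g_1$ and $\ell(v_3)=\ell(v_4)=g_2$, labeling the support-neighbors of $v_2$ with $g$, and making each block of pendants sum to $g_2\neq 0$; note that Lemma \ref{lem1} realizes a nonzero target sum even with a single pendant, so strong supports are not needed. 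Your use of Cauchy's theorem to produce $g$ of prime order dividing $d(v_2)-2$ is the same as the paper's, but the vertex labeling itself must be repaired as above.
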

\begin{proof}
Suppose that $H_3(p_1,p_2,p_3)$ is $\A$-vertex magic with magic constant $g\neq0$ under mapping $\ell$.
If one of $p_1$ and $p_3$ is not $0$, without loss of generality, assume $p_1\neq 0$.
Then $\ell(v_3)=\omega(v_4)-\ell(v_1)=g-g=0$, which leads to a contradiction.
Hence, $p_1=p_3=0$.
If $p_2\neq0$, then $\ell(v_2)=g$.
It follows that $\ell(v_4)=\omega(v_3)-\ell(v_2)=0$, which is also a contradiction.
Thus, $p_2=0$.
Since $\omega(v_4)=\ell(v_1)+\ell(v_3)=g$, it follows that
\begin{align*}
\omega(v_2)=\ell(v_1)+\ell(v_3)+(d(v_2)-2)g=(d(v_2)-1)g=g.
\end{align*}
Hence, $(d(v_2)-2)g=0$, which implies $o(g)$ is a common divisor of $d(v_2)-2$ and $|\A|$.

Conversely, let $m=gcd(d(v_2)-2,|\A|)>1$ and $p$ be a prime divisor of $m$.
By Cauchy's theorem, $\A$ has an element $g$ of order $p$.
By Lemma \ref{lem1}, there exists $g_1, g_2\in \A\setminus\{0\}$ such that $g=g_1+g_2$.
For $H_3(0,0,0)$, define $\ell(v_1)=\ell(v_2)=g_1$, $\ell(v_3)=\ell(v_4)=g_2$.
For each support vertex $v$ in graph, define $\ell(v)=g$.
In addition, for the pendant neighbors of $v$, we label them such that their label sum is $g_2$.
Now, $H_3(0,0,0)$ is $\A$-vertex magic graph with magic constant $g$ under $\ell$.
\end{proof}

\begin{proposition}\label{pro3.10}
Let $\A$ be a finite abelian group with $|\A|\geq3$.
Then $H_5(p_1,p_2,p_3)$ is $\A$-vertex magic if and only if $p_1=p_2=p_3=0$ and there exist two different vertices $g,h\in \A\setminus\{0\}$ such that $2h=(3-d(v_2))g$.
\end{proposition}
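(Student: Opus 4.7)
The plan is to follow the strategy of Proposition \ref{pro3.9}. Suppose $H_5(p_1,p_2,p_3)$ is $\A$-vertex magic with magic constant $g$ under a labeling $\ell$. Because $H_5$ has diameter $4$ it contains at least one support vertex, so $g\neq 0$ and $\ell(u)=g$ at every support vertex.

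For necessity I would first rule out $p_i\neq 0$ by the same mechanism used in Proposition \ref{pro3.9}: any $p_i>0$ promotes a cycle vertex to a support vertex of label $g$, and evaluating the weight equation at an adjacent degree-two cycle vertex then forces one of its neighbors to receive label $0$, contradicting $\ell(V)\subseteq \A\setminus\{0\}$. Hence $p_1=p_2=p_3=0$. Writing out the weight equations at the four cycle vertices, the two cycle vertices that play symmetric roles with respect to $v_2$ have identical neighborhoods outside $v_2$, and comparing their weight equations forces them to share a common label $h\in\A\setminus\{0\}$. Substituting into $\omega(v_2)=g$, where $v_2$ has $d(v_2)-2$ further support neighbors each contributing $g$, yields
\begin{align*}
2h+(d(v_2)-2)g=g,
\end{align*}
i.e. $2h=(3-d(v_2))g$. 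The remaining weight equation on the cycle then determines the label of the fourth cycle vertex, and requiring this label to be nonzero translates exactly into $h\neq g$. Thus two distinct nonzero elements $g,h$ with $2h=(3-d(v_2))g$ must exist.

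For sufficiency, given such $g$ and $h$, I would construct $\ell$ as follows: label every support vertex by $g$, label the two symmetric cycle neighbors of $v_2$ by $h$, and use the remaining weight equation to fix the label of the last cycle vertex (one checks this value lies in $\A\setminus\{0\}$ precisely because $g\neq h$). For each strong support vertex, Lemma \ref{lem1} supplies nonzero labels on its pendant neighbors whose sum is the required offset. A direct verification of $\omega(v)=g$ at each vertex then shows that $\ell$ is an $\A$-vertex magic labeling of $H_5(0,0,0)$ with constant $g$.

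The main obstacle is executing the symmetric reduction cleanly: one must justify that the two cycle neighbors of $v_2$ are forced to carry a common label, and that the side condition $h\neq g$ is exactly what prevents the fourth cycle vertex from being mapped to $0$. Once this is in place, the rest of the proof is mechanical, invoking Lemma \ref{lem1} to distribute pendant labels and paralleling the sufficiency constructions of Propositions \ref{pro3.7} and \ref{pro3.9}.
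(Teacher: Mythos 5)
Your overall plan coincides with the paper's: eliminate each $p_i>0$ by producing a forced zero label on a cycle neighbour, extract $2h=(3-d(v_2))g$ from $\omega(v_2)=g$ once the cycle neighbours of $v_2$ share a label $h$, read off $h\neq g$ from the nonvanishing of the remaining labels, and prove sufficiency by the explicit labeling. But the step you yourself identify as the crux is justified incorrectly. In $H_5$ the cycle is a $5$-cycle $v_1v_2v_3v_4v_5$ with the extra support vertices hanging off $v_2$, so the two cycle neighbours of $v_2$, namely $v_1$ and $v_3$, do \emph{not} have identical neighbourhoods outside $v_2$: $N(v_1)=\{v_2,v_5\}$ while $N(v_3)=\{v_2,v_4\}$. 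Comparing $\omega(v_1)=\omega(v_3)$ therefore yields $\ell(v_4)=\ell(v_5)$, not $\ell(v_1)=\ell(v_3)$. (And had the neighbourhoods really been identical, the two weight equations would be the same equation and would force nothing about $\ell(v_1)$ and $\ell(v_3)$; equal labels on two vertices are always forced by the weight equations of \emph{other} vertices whose neighbourhoods differ exactly in those two.) The correct chain, which is what the paper does, is: $\omega(v_1)=\omega(v_3)$ gives $\ell(v_4)=\ell(v_5)$; then $\omega(v_4)=\omega(v_5)$, i.e. $\ell(v_3)+\ell(v_5)=\ell(v_1)+\ell(v_4)$, gives $\ell(v_1)=\ell(v_3)=h$; and $\omega(v_3)=\omega(v_4)$ additionally gives $\ell(v_2)=\ell(v_3)=h$. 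From $\omega(v_1)=\ell(v_2)+\ell(v_5)=g$ one then gets $\ell(v_4)=\ell(v_5)=g-h\neq 0$, i.e. $h\neq g$, and $\omega(v_2)=2h+(d(v_2)-2)g=g$ gives the displayed relation.

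A secondary slip: you treat the cycle as having four vertices and speak of ``the fourth cycle vertex.'' There are two further cycle vertices $v_4,v_5$ (both labelled $g-h$) as well as $v_2$ itself (labelled $h$), and the sufficiency construction must assign all of these explicitly --- $\ell(v_1)=\ell(v_2)=\ell(v_3)=h$, $\ell(v_4)=\ell(v_5)=g-h$, each support neighbour $u_j$ of $v_2$ labelled $g$ with its pendants summing to $g-h$ via Lemma \ref{lem1} --- before verifying $\omega(v)=g$ at every vertex. With these corrections your argument becomes exactly the paper's proof.
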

\begin{proof}
Suppose $H_5(p_1,p_2,p_3)$ is $\A$-vertex magic graph with magic constant $g\neq 0$ under mapping $\ell$.
If $p_1\neq 0$, then $\ell(v_1)=g$ and  $\ell(v_4)=\omega(v_5)-\ell(v_1)=0$, which is a contradiction.
Using the similar argument, we obtain $p_3=0$.
Also, we claim $p_2= 0$.
Otherwise, $\ell(v_2)=g$ yields that $\ell(v_5)=\omega(v_1)-\ell(v_2)=0$, which is also a contradiction.
According to $\omega(v_1)=\omega(v_3)=\omega(v_4)=\omega(v_5)$, we have
\begin{align*}
\ell(v_2)+\ell(v_5)=\ell(v_2)+\ell(v_4)=\ell(v_3)+\ell(v_5)=\ell(v_1)+\ell(v_4)=g,
\end{align*}
then the equalities $\ell(v_1)=\ell(v_2)=\ell(v_3)$ and $\ell(v_4)=\ell(v_5)$ hold.
Assume $\ell(v_2)=h$, then $\ell(v_4)=\ell(v_5)=g-h\neq0$ and $\ell(v_3)=h=\omega(v_2)-\ell(v_1)-(d(v_2)-2)g=(3-d(v_2))g-h$.
Thus, $(3-d(v_2))g=2h$.

Conversely, there exist different elements $g, h\in \A\setminus\{0\}$ such that $2h=(3-d(v_2))g$.
For $H_5(0,0,0)$, define $\ell(v_1)=\ell(v_2)=\ell(v_3)=h$ and $\ell(v_4)=\ell(v_5)=g-h$.
For each support neighbor $u_j$ of $v_2$, define $\ell(u_j)=g$.
In addition, for pendant neighbors of $u_j$, by Lemma \ref{lem1}, we can label them such that their label sum is $g-h$.
Then, $H_5(0,0,0)$ is a $\A$-vertex magic graph under labeling $\ell$.
\end{proof}

\begin{corollary}
Let $\A$ be a finite abelian group with $|\A|\geq3$.
Then $H_5(0,0,0)$ with $d(v_2)=3$ is $\A$-vertex magic graph if and only if the order of $\A$ is even.
\end{corollary}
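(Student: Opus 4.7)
The plan is to derive this corollary as a direct specialization of Proposition \ref{pro3.10} with $d(v_2)=3$. Under that substitution, the coefficient $(3-d(v_2))$ becomes $0$, so the existence condition from Proposition \ref{pro3.10} collapses to: there exist two distinct nonzero elements $g,h\in\A\setminus\{0\}$ with $2h=0$. Thus the problem reduces to deciding when $\A$ admits a nonzero element of order dividing $2$ (i.e.\ an involution) together with a second nonzero element different from it.

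For necessity, assume $H_5(0,0,0)$ with $d(v_2)=3$ is $\A$-vertex magic. Proposition \ref{pro3.10} supplies distinct $g,h\in\A\setminus\{0\}$ with $2h=0$. Since $h\neq 0$, $h$ has order exactly $2$, so by Lagrange (or Lemma \ref{lem3}) the integer $2$ divides $|\A|$, hence $|\A|$ is even.

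For sufficiency, suppose $|\A|$ is even. By Lemma \ref{lem3} (Cauchy's theorem) applied to the prime $2$, $\A$ contains an element $h$ of order $2$, i.e.\ $h\neq 0$ and $2h=0$. Because $|\A|\geq 3$, the set $\A\setminus\{0,h\}$ is nonempty, so we may pick $g\in\A\setminus\{0,h\}$. Then $g,h$ are distinct nonzero elements satisfying $2h=0=(3-d(v_2))g$, which is exactly the hypothesis of Proposition \ref{pro3.10}. Therefore $H_5(0,0,0)$ is $\A$-vertex magic.

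No step should pose a genuine obstacle: the entire argument is a translation of the general criterion in Proposition \ref{pro3.10} into the special case $d(v_2)=3$, combined with the standard observation that involutions in a finite abelian group exist precisely when the order is even. The only minor care required is to verify that a second element $g\neq h$ exists in $\A\setminus\{0\}$, which follows immediately from $|\A|\geq 3$.
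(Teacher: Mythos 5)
Your proof is correct and matches the paper's intent exactly: the corollary is stated there without proof as an immediate specialization of Proposition~\ref{pro3.10}, and you have simply spelled out that specialization (with $d(v_2)=3$ the condition $2h=(3-d(v_2))g$ collapses to $h$ being an involution, and Cauchy's theorem plus $|\A|\geq 3$ handles both directions). No gaps.
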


Observe that $H_4(p_1,p_2,p_3)$, $H_6(p_1,p_2,p_3,p_4,p_5)$, $H_7(p_1,p_2,p_3)$, $H_8(p_1,p_2,p_3)$ and $H_9(p_1,p_2,p_3)$ are all generalized sun graphs.
By Lemma \ref{lem2}, $H_7(p_1,p_2,p_3)$, $H_8(p_1,p_2,p_3)$ and $H_9(p_1,p_2,p_3)$ cannot be $\A$-vertex magic for any abelian group $\A$.

At the end of this section, we determine the group vertex magic unicyclic graphs with diameter $4$.
\begin{theorem}
Let $G$ be a group vertex magic unicyclic graphs with diameter $4$.
Then is one of the following condition holds:
\begin{enumerate}[{\rm(i)}]
\item $G=H_2(p_1,p_2,p_3)$,  where $p_2$ and $p_3$ are odd and, except for $v_2$ and $v_3$, all  other non-pendant vertices are odd strong support;

\item $G=H_4(p_1,p_2,p_3)$, where $p_i\geq1$ and $p_i$ is odd, for each $1\leq i\leq 3$;

\item $G=H_6(p_1,p_2,p_3,p_4,p_5)$, where $p_i\geq1$ and $p_i$ is odd, for each $1\leq i\leq 5$;.

\end{enumerate}
\end{theorem}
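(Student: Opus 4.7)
The plan is to go through the nine families $H_1,\ldots,H_9$ depicted in Figure \ref{fig3} one by one and assemble the verdicts. The heavy lifting has already been done in the propositions and corollaries of this section; what remains is bookkeeping plus a short argument ruling out $H_3$ and $H_5$ uniformly in $\A$.

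First I would dispose of the easy cases. The family $H_1$ is excluded by Corollary 3.6. The family $H_2$ is settled by Corollary 3.8, which produces exactly the description in (i). For the generalized sun graphs $H_4$ and $H_6$, Lemma \ref{lem2} says group vertex magicness is equivalent to every vertex on the cycle being an \emph{odd} support vertex. Since the non-pendant vertices of $H_4$ are $v_1,v_2,v_3$ and of $H_6$ are $v_1,\ldots,v_5$, this translates directly into the conditions $p_i\ge 1$ and $p_i$ odd in (ii) and (iii). Finally $H_7$, $H_8$, $H_9$ are generalized sun graphs in which at least one cycle vertex is forced to be non-support; the remark right before the theorem, combined with Lemma \ref{lem2}, shows they are not $\A$-vertex magic for any $\A$.

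The remaining task is to verify that $H_3$ and $H_5$ are never group vertex magic. The cleanest route is a parity argument: in both graphs the diameter-$4$ condition forces a support-vertex neighbour of $v_2$ that sits off the cycle together with at least one pendant, so the graph contains vertices of degree $1$ and cycle vertices of degree $2$. Hence the degrees have mixed parity and the graph fails to be $\mathbb{Z}_2$-vertex magic, so a fortiori it is not group vertex magic. (If one prefers the characterization route, Proposition \ref{pro3.9} also rules out $H_3$ by choosing $\A=\mathbb{Z}_p$ for a prime $p$ coprime to $d(v_2)-2$, and Proposition \ref{pro3.10} rules out $H_5$ by picking $\A=\mathbb{Z}_2$ or another group where $2h=(3-d(v_2))g$ has no admissible solution.)

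Combining the four verdicts leaves precisely the three families in (i)--(iii), completing the proof. The only step that requires care is the claim that in $H_3$ and $H_5$ a diameter-$4$ realisation must contain a pendant vertex: I would verify this by observing that the cycle alone has diameter $\le 2$ in $H_3$ and $\le 2$ in $H_5$, so achieving diameter $4$ forces at least two successive non-cycle edges attached at $v_2$, the last of which terminates in a pendant. Once this is recorded, the $\mathbb{Z}_2$ obstruction becomes immediate and the classification follows.
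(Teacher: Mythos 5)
Your proposal is correct and follows essentially the same route as the paper, which states this theorem as a summary of Propositions \ref{pro3.4}--\ref{pro3.10}, the corollaries on $H_1$ and $H_2$, and the Lemma \ref{lem2} treatment of the generalized sun graphs $H_4$, $H_6$, $H_7$, $H_8$, $H_9$. The only step the paper leaves implicit is the elimination of $H_3$ and $H_5$; your $\mathbb{Z}_2$-parity argument (a degree-$2$ cycle vertex such as $v_4$ coexisting with the pendant vertices forced by diameter $4$) is exactly the obstruction the paper uses elsewhere for $G_2$, $G_3$, $G_4$ and $H_1$, so it fits the intended proof.
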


\section{The vertex magicness of bicyclic graphs}

In this section, we discuss the vertex magicness of bicyclic graphs with diameter $3$ which are  shown in Figure \ref{fig4}.
By analyzing each class of graphs respectively, we obtain the following  results.
\begin{figure}[htbp]
\centering
\includegraphics[scale=0.8]{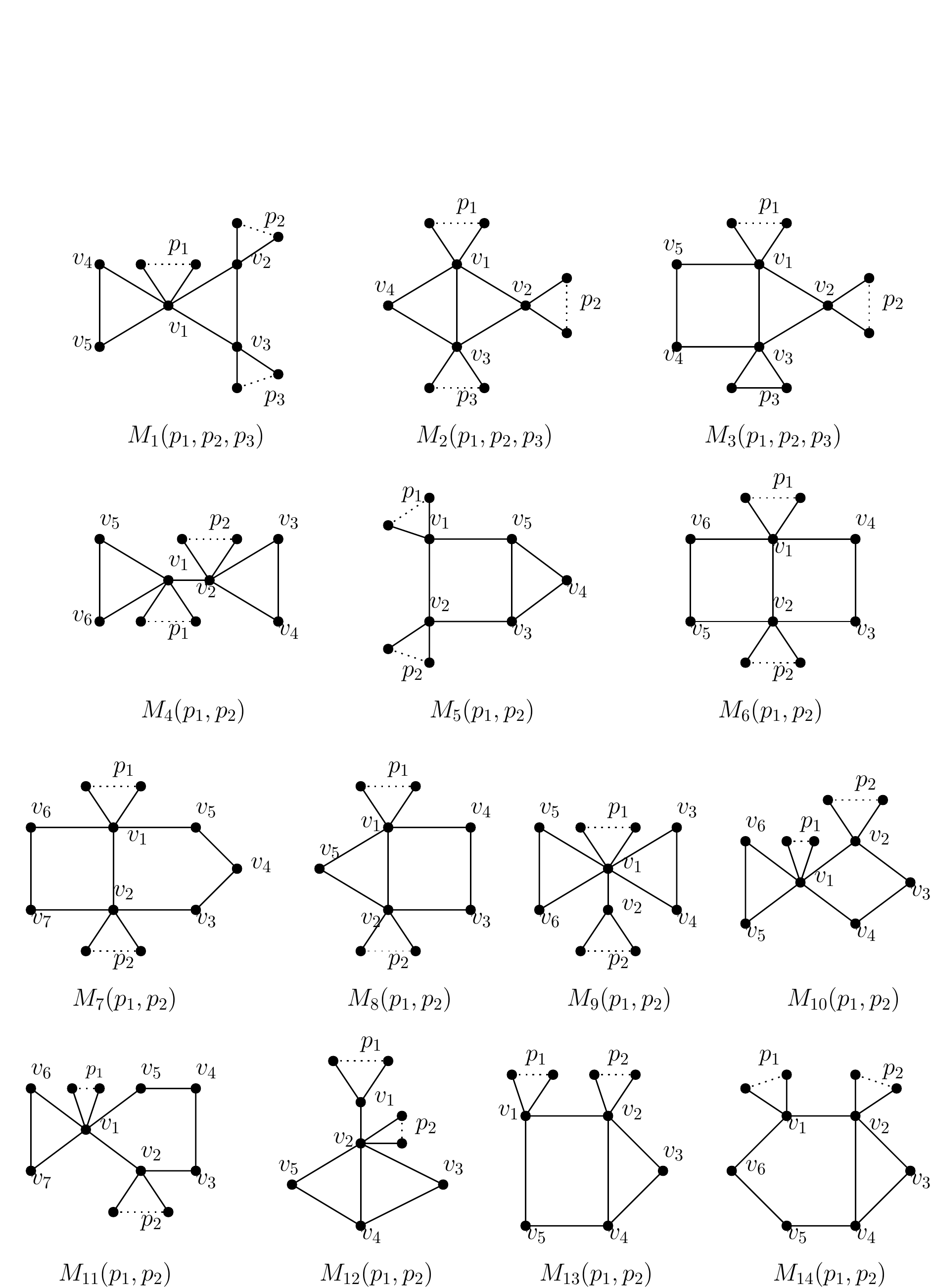}
\caption{The bicyclic graphs of diameter $3$}
\label{fig4}
\end{figure}

\begin{proposition}\label{pro4.1}
Let $\A$ be a finite abelian group with $|\A|\geq 3$.
Then $M_1(p_1,p_2,p_3)$ is $\A$-vertex magic if and only if $p_1=0$, $p_2$ and $p_3$ are non-zero, and $\A$ contains a square element.
\end{proposition}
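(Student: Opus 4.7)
My plan is to mirror the structure of Propositions~\ref{pro3.2} and~\ref{pro3.4}: establish necessity by propagating forced labels around the bicyclic core and deriving either contradictions or arithmetic constraints, then prove sufficiency by exhibiting an explicit labeling that uses Lemma~\ref{lem1} to handle the pendant contributions. Throughout, since the graph has diameter $3$ it must contain at least one support vertex, so any magic constant $\mu$ for an $\A$-vertex magic labeling $\ell$ is nonzero; write $\mu=g$, and recall that every support vertex is then forced to be labeled $g$.

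For necessity, I first want to pin down which of $p_1,p_2,p_3$ can be zero. I would argue that $p_2$ and $p_3$ must both be nonzero: if one of them vanishes, then the corresponding core vertex has all its non-core neighbors absent, and writing the equation $\omega(\cdot)=g$ at an adjacent degree-$2$ (or degree-small) vertex gives a forced $0$-label on some neighbor, violating $\ell\colon V\to\A\setminus\{0\}$ exactly as in the ``$\ell(v_4)=0$'' step of Proposition~\ref{pro3.2}. Next I would show $p_1=0$: if $p_1\neq 0$, then $v_1$ is a support vertex so $\ell(v_1)=g$, and combining this with $\ell(v_2)=\ell(v_3)=g$ in the equation at some neighbor of $v_1$ in the bicyclic core again produces a label $0$, the desired contradiction.

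With $p_1=0$ and $p_2,p_3\neq 0$ in place, both $v_2$ and $v_3$ are support vertices, hence $\ell(v_2)=\ell(v_3)=g$. Writing $\omega(v_2)=\omega(v_3)=g$ and subtracting will force symmetric labels on the remaining core vertices; say the two degree-$2$ vertices of the bicyclic core receive a common label $h$. Substituting these values into $\omega(v_1)=g$ (noting $p_1=0$, so $v_1$ has exactly two core neighbors of label $h$) will reduce to $2h=g$, with $h\neq 0$ because $\ell$ avoids $0$. This is precisely the statement that $g$ is a square element of $\A$, completing necessity.

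For sufficiency, assume $p_1=0$, $p_2,p_3\neq 0$, and pick a square $g=2h$ with $h\neq 0$. I would set $\ell(v_2)=\ell(v_3)=g$, label the two core vertices adjacent to $v_1$ by $h$, and label every remaining support vertex (pendant neighbor of $v_2$ or $v_3$) by $g$. For the pendant neighbors of $v_2$ (resp.~$v_3$), apply Lemma~\ref{lem1} to choose their labels from $\A\setminus\{0\}$ with the correct sum so that $\omega(v_2)=\omega(v_3)=g$; this requires $|\A|\geq 3$, which is given. A direct check that $\omega=g$ at $v_1$ and at both degree-$2$ core vertices finishes the construction. The main obstacle I anticipate is the bookkeeping in the forced-label propagation: confirming that the $p_1\neq 0$ case always yields a $0$-label regardless of which neighbor of $v_1$ one examines, and that the collapse of the core equations genuinely forces $2h=g$ rather than a weaker relation; once the labeling pattern is identified, sufficiency is a routine verification via Lemma~\ref{lem1}.
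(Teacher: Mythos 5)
Your overall route is the same as the paper's: propagate the forced label $g$ on support vertices to show $p_1=0$ and $p_2,p_3\neq 0$, read off from the core equations that $g$ must be a square, then exhibit an explicit labeling using Lemma~\ref{lem1} for the pendants. The necessity half of your plan goes through essentially as in the paper, up to a sign: with $h$ the common label of the two degree-two core vertices $v_4,v_5$, the equation at $v_1$ reads $2g+2h=g$, i.e.\ $2h=-g$ rather than $2h=g$; the conclusion that $g$ is a square survives since $g=2(-h)$ and $-h\neq 0$.

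The genuine gap is in sufficiency. Here $M_1$ is two triangles $v_1v_2v_3$ and $v_1v_4v_5$ glued at $v_1$, so $v_1$ is adjacent to all of $v_2,v_3,v_4,v_5$. You never assign $\ell(v_1)$, and the assignment you do make, $\ell(v_4)=\ell(v_5)=h$ with $g=2h$, cannot be completed: $\omega(v_4)=\ell(v_1)+\ell(v_5)=g$ forces $\ell(v_1)=g-h=h$, and then $\omega(v_1)=\ell(v_2)+\ell(v_3)+\ell(v_4)+\ell(v_5)=2g+2h=3g$, which is not $g$ in general. The paper's labeling is $\ell(v_1)=g+h$ and $\ell(v_4)=\ell(v_5)=-h$, which does satisfy the core equations --- this is exactly the sign you lost in the necessity step. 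Worse, the ``routine verification'' you defer hides a constraint that defeats the statement as written (the paper overlooks it too): the forced value $\ell(v_1)=g-\ell(v_4)=3h$ must be nonzero, and when $p_2=1$ the unique pendant at $v_2$ must receive $\omega(v_2)-\ell(v_1)-\ell(v_3)=-3h\neq 0$. In $\A=\mathbb{Z}_3$ every nonzero element is a square, yet $3h=0$ for all $h$; indeed the necessity computation gives $2\ell(v_1)=3g=0$, hence $\ell(v_1)=0$, so $M_1(0,p_2,p_3)$ is not $\mathbb{Z}_3$-vertex magic. So no choice of labeling can close your sufficiency argument for the condition as stated; it would have to be strengthened, e.g.\ to the existence of $h\neq 0$ with $2h\neq 0$ and $3h\neq 0$ (plus the usual care with weak support vertices).
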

\begin{proof}
Suppose $M_1(p_1,p_2,p_3)$ is $\A$-vertex magic with magic constant $g$ under mapping $\ell$.
We first point out that $p_1=0$.
Otherwise, if $p_1\neq0$, then $\ell(v_1)=g$ and $\ell(v_5)=\omega(v_4)-\ell(v_1)=0$, which is a contradiction.
Observe that $p_2$ and $p_3$ cannot all be zero, since the diameter of $M_{1}(p_1,p_2,p_3)$ is $3$.
Without loss of generality, assume that $p_2=0$ and $p_3\neq0$.
Then $\ell(v_1)=\omega(v_2)-\ell(v_3)=g-g=0$.
Hence, both $p_2$ and $p_3$ are non-zero.
According to $\omega(v_4)=\omega(v_5)=g$, we obtain $\ell(v_4)=\ell(v_5)=g-\ell(v_1)$.
Thus, $\ell(v_1)=\ell(v_2)+\ell(v_3)+2\ell(v_4)=2g+2\ell(v_4)=g$, which means that $g$ is a square element of $\A$.

Conversely, assume $g$ is a square of $\A$ and $g=2h$.
For $M_1(0,p_2,p_3)$, define $\ell(v_1)=g+h$, $\ell(v_2)=\ell(v_3)=g$ and $\ell(v_4)=\ell(v_5)=-h$.
For each support vertex $v_i$, where $i=1~\text{or}~2$, we label the pendant neighbors of $v_i$
such that their label sum is $-h$.
Now, $M_1(0,p_2,p_3)$ is vertex magic with magic constant $g$ under $\ell$.
\end{proof}

\begin{proposition}\label{pro4.2}
Let $\A$ be a finite abelian group with $|\A|\geq 3$.
Then $M_2(p_1,p_2,p_3)$ is $\A$-vertex magic if and only if $p_1=p_3=0$, $p_2\geq2$, and $\A$ contains a square element.
\end{proposition}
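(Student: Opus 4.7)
The plan is to mirror, step by step, the proof of Proposition \ref{pro4.1}. Suppose $M_2(p_1,p_2,p_3)$ is $\A$-vertex magic with magic constant $g$ under a labeling $\ell$.

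First, I would pin down which pendant parameters must vanish. Following the same propagation idea used in the opening of the proof of Proposition \ref{pro4.1}, a nonzero $p_1$ (respectively $p_3$) would make $v_1$ (respectively $v_3$) a support vertex, forcing $\ell(v_1)=g$ (respectively $\ell(v_3)=g$); traversing the magic-sum equations at the incident degree-$2$ cycle vertices of $M_2$ would then drive the label of some vertex to $0$, a contradiction. Hence $p_1=p_3=0$.

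Next, I would establish $p_2 \geq 2$. The case $p_2=0$ is ruled out by the diameter-$3$ hypothesis (with $p_1=p_3=0$, the diameter would collapse below $3$). For $p_2=1$, the unique pendant neighbor $u$ of $v_2$ together with an appropriate degree-$2$ cycle vertex $v$ of $M_2$ satisfies $|N(u)\cap N(v)|=\deg(u)-1=\deg(v)$, so Lemma \ref{lem0} precludes any $\A$-vertex magic labeling. Thus $p_2\ge 2$.

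With $p_1=p_3=0$, writing out the equations $\omega(v_i)=g$ at each cycle vertex of $M_2$ yields a small linear system in $\A$; solving it (and using the forced equalities among labels of the symmetric degree-$2$ vertices) produces a relation of the form $g=2h$ for some $h\in \A\setminus\{0\}$, so $g$ must be a square of $\A$. For the converse, given a square $g=2h$ with $h\neq 0$, I would assign labels to the cycle vertices from $\{h, g-h\}$ as dictated by the system, set $\ell(v_2)=g$, and invoke Lemma \ref{lem1} (whose hypothesis is met since $p_2\geq 2$) to select $p_2$ non-identity elements of $\A$ summing to the residue required to make $\omega(v_2)=g$. This yields an $\A$-vertex magic labeling with constant $g$.

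The principal obstacle will be isolating the precise pair $(u,v)$ that triggers Lemma \ref{lem0} in the subcase $p_2=1$: the correct near-twin of the pendant at $v_2$ depends on the adjacency structure of $M_2$ around $v_2$, and the verification of the equation $|N(u)\cap N(v)|=\deg(u)-1=\deg(v)$ must be read off Figure \ref{fig4}. Once this configuration is located, the remainder of the argument is a routine linear computation in $\A$ together with an explicit labeling construction.
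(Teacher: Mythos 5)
Your overall plan coincides with the paper's proof in almost every step: $p_1=p_3=0$ follows by propagating through the degree-$2$ vertex $v_4$ (which in $M_2$ is adjacent to both $v_1$ and $v_3$, so $p_1\neq 0$ gives $\ell(v_3)=\omega(v_4)-\ell(v_1)=0$); the diameter forces $p_2\neq 0$; comparing $\omega(v_1)=\ell(v_2)+\ell(v_3)+\ell(v_4)$ with $\omega(v_3)=\ell(v_1)+\ell(v_2)+\ell(v_4)$ gives $\ell(v_1)=\ell(v_3)$ and hence $g=\omega(v_4)=2\ell(v_1)$ is a square; and the converse is the explicit labeling you describe. The one step that fails as written is your exclusion of $p_2=1$. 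You propose to apply Lemma \ref{lem0} to a pair $(u,v)$ in which $u$ is the pendant neighbor of $v_2$; but then $\deg(u)-1=0$, so the required equality $|N(u)\cap N(v)|=\deg(u)-1=\deg(v)$ would force $\deg(v)=0$, which no vertex of this connected graph satisfies. No choice of $v$ can make your stated configuration work. If you want to invoke Lemma \ref{lem0}, the correct near-twin pair is $(v_2,v_4)$: when $p_2=1$ we have $N(v_2)=\{v_1,v_3,u\}$ and $N(v_4)=\{v_1,v_3\}$, so $|N(v_2)\cap N(v_4)|=2=\deg(v_2)-1=\deg(v_4)$. The paper avoids the lemma altogether and argues directly that $\ell(u)=\omega(v_2)-\ell(v_1)-\ell(v_3)=\omega(v_2)-\omega(v_4)=g-g=0$, a contradiction. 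Either repair closes the gap, but as submitted the argument for the condition $p_2\geq 2$ is not valid.

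A smaller imprecision: in the converse you say the cycle labels are drawn from $\{h,g-h\}$; since $g=2h$ this set is just $\{h\}$, whereas $v_4$ must receive $-h$ so that $\omega(v_1)=\ell(v_2)+\ell(v_3)+\ell(v_4)=g+h-h=g$. The paper's labeling is $\ell(v_1)=\ell(v_3)=h$, $\ell(v_4)=-h$, $\ell(v_2)=g$, with the $p_2\geq 2$ pendant neighbors of $v_2$ labeled by nonzero elements summing to $0$ via Lemma \ref{lem1} (this is exactly where $p_2\geq 2$ is needed, as you correctly note).
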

\begin{proof}
Suppose $M_2(p_1,p_2,p_3)$ is $\A$-vertex magic with magic constant $g\neq 0$ under mapping  $\ell$.
If one of $p_1$ and $p_3$ is not zero, without loss of generality, assume $p_1\neq0$.
Then $\ell(v_3)=\omega(v_4)-\ell(v_1)=0$, which is a contradiction.
Hence, $p_1=p_3=0$.
Now, in view of  the diameter of $M_2(p_1,p_2,p_3)$ is $3$, $p_2\neq0$ and then $\ell(v_2)=g$.
If $p_2=1$, let $u$ be the unique pendant vertex of $v_2$.
Then $\ell(u)=\omega(v_2)-\ell(v_1)-\ell(v_3)=\omega(v_2)-\omega(v_4)=0$, which is also a contradiction.
Thus, $v_2$ is a strong support vertex.
According to $\omega(v_1)=\ell(v_2)+\ell(v_3)+\ell(v_4)=g$ and $\omega(v_3)=\ell(v_1)+\ell(v_2)+\ell(v_4)=g$, we have $\ell(v_1)=\ell(v_3)$.
Thus, $\omega(v_4)=g=2\ell(v_1)$, which means $g$ is a square of $\A$.

Conversely, let $g$ be a square of $\A$ and $g=2h$.
Define $\ell(v_1)=\ell(v_3)=h$, $\ell(v_4)=-h$ and $\ell(v_2)=g$.
For the pendant neighbors of $v_2$, label them such that their label sum is $0$.
Then $M_2(0,p_2,0)$ is $\A$-vertex magic with magic constant $g$ under mapping $\ell$.
\end{proof}

\begin{proposition}\label{pro4.3}
For any abelian group $\A$, $M_3(p_1,p_2,p_3)$ is not $\A$-vertex magic.
\end{proposition}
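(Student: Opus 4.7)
The plan is to argue that $M_3(p_1,p_2,p_3)$ admits no $\A$-vertex magic labeling for any abelian group $\A$, using the same style as in Propositions \ref{pro4.1} and \ref{pro4.2}. My approach is to first inspect Figure \ref{fig4} for a pair of vertices $u,v$ in the base bicyclic structure of $M_3$ satisfying $|N(u)\cap N(v)| = \deg(u)-1 = \deg(v)$; if such a pair exists, the conclusion is immediate from Lemma \ref{lem0}. This is the most natural route, since Lemma \ref{lem0} is precisely the tool for ruling out vertex-magicness simultaneously across all abelian groups, and structural symmetry in small bicyclic diameter-$3$ graphs typically forces such containment of neighborhoods.

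If Lemma \ref{lem0} does not apply directly, the backup is to assume for contradiction that $\ell$ is an $\A$-vertex magic labeling with magic constant $g$. Since $M_3(p_1,p_2,p_3)$ has diameter $3$, some $p_i\neq 0$, so $M_3$ contains a support vertex; this forces $g\neq 0$ and $\ell(v)=g$ at every support vertex. I would then chain the equations $\omega(v_j)=g$ around the two cycles of $M_3$, expressing the labels of the cycle vertices in terms of $g$ and of the labels already fixed at support vertices. Following the template of Propositions \ref{pro4.1} and \ref{pro4.2}, one or two substitutions of the form $\ell(v_k)=\omega(v_j)-\ell(v_i)=g-g=0$ should arise, contradicting that $\ell$ takes values in $\A\setminus\{0\}$.

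The main obstacle is purely bookkeeping: without knowing $\A$, any contradiction must be group-independent, so the argument cannot rely on orders of elements, squares, or involutions as in Propositions \ref{pro4.1} and \ref{pro4.2}. The key is therefore to locate the vertex pair in $M_3$ that forces $\ell(\cdot)=0$ unconditionally, which is precisely what a Lemma \ref{lem0} configuration (or an equivalent short equation chain within the two cycles) provides. Once that pair is identified from Figure \ref{fig4}, the proof reduces to a one-line invocation of the lemma.
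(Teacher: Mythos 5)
Your plan contains both ingredients of the paper's proof, but you present them as alternatives when in fact they must be used in sequence. The paper first kills the cases $p_1\neq 0$ and $p_3\neq 0$ exactly as in your ``backup'' route: a nonzero $p_i$ forces $\ell(v_i)=g$ at that support vertex, and then a degree-two cycle vertex adjacent to it gets $\ell(\cdot)=\omega(\cdot)-g=0$. Only after $p_1=p_3=0$ is established does the graph contain the configuration you want for your primary route: then $N(v_5)=\{v_1,v_4\}\subseteq N(v_3)=\{v_1,v_2,v_4\}$, so $\omega(v_3)=\omega(v_5)+\ell(v_2)$ forces $\ell(v_2)=0$ --- precisely the situation of Lemma \ref{lem0} with $|N(v_3)\cap N(v_5)|=\deg(v_3)-1=\deg(v_5)$, which the paper writes out directly rather than citing the lemma. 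Your ``one-line invocation'' cannot be applied to $M_3(p_1,p_2,p_3)$ outright, because pendants attached at $v_3$ raise $\deg(v_3)$ and destroy the hypothesis $|N(u)\cap N(v)|=\deg(u)-1$; so the reduction step is not optional. With that correction (and with the vertices actually identified from Figure \ref{fig4}, which your proposal leaves as a placeholder), your argument is the paper's argument.
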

\begin{proof}
The result will be proved by negation.
Suppose that $M_3(p_1,p_2,p_3)$ is $\A$-vertex magic with constant $g$ under $\ell$.
We assert $p_1=p_3=0$.
If $p_1\neq0$, then $\ell(v_1)=g$ and $\ell(v_4)=\omega(v_5)-\ell(v_1)=0$, which leads to a contradiction.
With similar argument, we obtain $p_3=0$.
However, the equality $\omega(v_3)=\ell(v_1)+\ell(v_2)+\ell(v_4)=\omega(v_5)+\ell(v_2)$ gives that $\ell(v_2)=0$, which is also a contradiction.
Thus, $M_3(p_1,p_2,p_3)$ is not $\A$-vertex magic, for any abelian group $\A$.
\end{proof}

\begin{proposition}\label{pro4.4}
Let $\A$ be a finite abelian group with $|\A|\geq 3$.
Then $M_4(p_1,p_2)$ is $\A$-vertex magic if and only if $p_1=p_2=0$ and there exist different elements $g,h\in \A\setminus\{0\}$ such that $g\neq2h$, $2g\neq2h$ and $3g=3h$.
\end{proposition}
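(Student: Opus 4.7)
The plan is to mirror the two-sided pattern of Propositions~\ref{pro4.1}--\ref{pro4.3}: first rule out pendants, then read off a compatibility condition on $\A$ from the resulting system of vertex-sum equations, and finally build an explicit labeling that realizes it.

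For necessity, suppose $M_4(p_1,p_2)$ is $\A$-vertex magic with magic constant $g$ under a labeling $\ell$. As in the earlier propositions, as soon as any $p_i > 0$ the corresponding attachment vertex is a support vertex and therefore carries the label $g$; substituting this into $\omega(\cdot)=g$ at an adjacent cycle vertex produces an equation of the form $g + \ell(x) = g$, which forces $\ell(x)=0$ and contradicts $\ell : V \to \A \setminus \{0\}$. Hence $p_1 = p_2 = 0$. Specializing to the base graph $M_4(0,0)$, I would then write out $\omega(v) = g$ at every vertex. Pairs of vertices with the same neighborhood (up to a common neighbor) force the corresponding labels to coincide, which collapses the number of unknowns to two or three free parameters. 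Propagating the equations around the two cycles yields a relation of the form $3g = 3h$, where $h$ is one of the surviving label values, while the requirement that every label stay in $\A \setminus \{0\}$ translates directly into $g \ne h$, $g \ne 2h$, and $2g \ne 2h$.

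For sufficiency, given distinct $g, h \in \A \setminus \{0\}$ with $g \ne 2h$, $2g \ne 2h$, and $3g = 3h$, observe that $g - h$ is nonzero and satisfies $3(g-h) = 0$, so it has order exactly $3$ in $\A$. I would then define $\ell$ on $M_4(0,0)$ by assigning the expressions coming out of the necessity analysis (combinations of $g$ and $h$ such as $h$, $g - h$, $2h - g$, and $2g - 2h$) to the appropriate vertices, and verify by direct substitution that every vertex sum equals $g$. The three stated inequalities are precisely what guarantees that each such combination lies in $\A \setminus \{0\}$, while the equation $3g = 3h$ is exactly what reconciles the vertex-sum contributions of the two cycles.

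The main obstacle I anticipate is bookkeeping: setting up the vertex-sum equations carefully enough that the derived compatibility relation matches the stated form (a pair $(g,h)$ with $3g = 3h$) rather than some equivalent reformulation, and verifying that no redundant equation hides an additional constraint on $\A$. Lemma~\ref{lem1} is not invoked here, since $M_4(0,0)$ has no pendant vertices whose labels still need to be distributed.
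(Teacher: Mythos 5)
Your proposal is correct and follows essentially the same route as the paper: eliminate $p_1,p_2$ via the support-vertex argument, use the equal neighborhood sums to collapse to the two parameters $g$ and $h=\ell(v_2)$, derive $3g=3h$ from $\omega(v_1)=g$, read the three inequalities off the nonvanishing of $2h-g$, $g-h$, $2g-2h$, and reuse exactly these values for the converse labeling. The only addition is your (correct) side remark that $g-h$ has order $3$, which the paper does not state explicitly.
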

\begin{proof}
Suppose that $M_4(p_1,p_2)$ is $\A$-vertex magic with magic constant $g$ under mapping $\ell$.
If $p_1\neq0$, then $\ell(v_6)=\omega(v_5)-\ell(v_1)=g-g=0$, which is a contradiction.
Similarly, we can prove that $p_2=0$.
According to $\omega(v_3)=\omega(v_4)$, we have $\ell(v_3)=\ell(v_4)=g-\ell(v_2)$.
Also, from  $\omega(v_5)=\omega(v_6)=g$, $\ell(v_5)=\ell(v_6)=g-\ell(v_1)$ follows.
Thus, $\omega(v_2)=\ell(v_1)+\ell(v_3)+\ell(v_4)=\ell(v_1)+2g-2\ell(v_2)=g$, which implies $\ell(v_1)=2\ell(v_2)-g$.
Moreover, the equality $\omega(v_1)=2(g-\ell(v_1))+\ell(v_2)=4g-3\ell(v_2)=g$ yields that $3g=3\ell(v_2)$.
Based on the fact that $\ell(v_1)=2\ell(v_2)-g$, $\ell(v_3)=g-\ell(v_2)$ and $\ell(v_5)=2g-2\ell(v_2)$ are all not $0$, the result follows.

Conversely, define $\ell(v_1)=2h-g$, $\ell(v_2)=h$, $\ell(v_3)=\ell(v_4)=g-h$ and $\ell(v_5)=\ell(v_6)=2g-2h$.
Then, $\ell$ is $\A$-vertex magic labeling of $M_4(0,0)$.
\end{proof}

\begin{proposition}\label{pro4.5}
Let $\A$ be a finite abelian group with $|\A|\geq 3$.
Then $M_5(p_1,p_2)$ is $\A$-vertex magic if and only if both $p_1$ and $p_2$ are not zero and, $\A$ contains square element.
\end{proposition}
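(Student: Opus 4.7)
The plan is to follow the same template as Propositions \ref{pro4.1} and \ref{pro4.2}: show that having diameter $3$ and admitting a magic labeling forces pendants at both distinguished positions, then squeeze the weight equations around the bicyclic skeleton of $M_5$ to exhibit a square element in $\A$.

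For necessity, suppose $M_5(p_1,p_2)$ is $\A$-vertex magic with magic constant $g$ under $\ell$. Since the diameter equals $3$, the graph contains a support vertex, so $g\neq 0$. I would first rule out $p_1=0$ (and, by the symmetry of the two cycles in $M_5$, $p_2=0$ as well): if either parameter vanishes, then the two symmetric skeleton vertices on that side of the graph share enough of their neighbourhood with each other that, after equating $\omega=g$ at each and at their common neighbours, the resulting cascade of weight identities forces the label of some vertex to be $g-g=0$, contradicting $\ell(v)\in\A\sm\{0\}$. This is exactly the opening move used in Proposition \ref{pro4.2}.

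Once $p_1,p_2\geq 1$ is secured, both support vertices must carry label $g$. Writing the condition $\omega(v)=g$ at each of the remaining non-pendant skeleton vertices, and using the mirror symmetry of the two cycles (which forces a pair of otherwise free labels to coincide, say both equal to some $h\in\A\sm\{0\}$), the system collapses to a single equation of the form $2h=g$, exhibiting $g$ as a square. For sufficiency, take a square $g=2h$ with $h\in\A\sm\{0\}$; note $h\neq g$, else $g=2h$ would give $h=0$, so $g-h\neq 0$ as well. Assign $g$ to every support vertex, distribute $h$ and $g-h$ over the non-support skeleton vertices in the symmetric fashion prescribed by the weight equations derived in the necessity step, and for each support vertex $v_i$ invoke Lemma \ref{lem1} to label its $p_i\geq 1$ pendant neighbours with elements of $\A\sm\{0\}$ summing to the residue needed so that $\omega(v_i)=g$. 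Checking $\omega=g$ at every vertex is then mechanical.

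The main obstacle is the $p_i=0$ step: with two overlapping cycles the weight equations do not close up as cleanly as in the unicyclic case treated in Lemma \ref{lem2}, and one must chase $\omega$-equalities around \emph{both} cycles simultaneously to locate the forced zero label. Once this combinatorial bookkeeping is done, the square condition falls out in a single stroke from the weight equation at a single skeleton vertex, and sufficiency is a direct construction guarded only by the easy inequalities $h\neq 0$ and $h\neq g$.
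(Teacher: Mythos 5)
Your proposal is correct in substance and follows essentially the same route as the paper: the diameter condition plus a forced zero label (in the paper, $\ell(v_5)=\omega(v_1)-\ell(v_2)=g-g=0$ when $p_1=0$ and $p_2\neq0$) shows both $p_i$ are nonzero, the weight equations then give $\ell(v_3)=\ell(v_5)=-\ell(v_4)$ and $\omega(v_4)=2\ell(v_3)=g$, so $g$ is a square, and sufficiency is the matching construction with pendant sums supplied by Lemma \ref{lem1}. One slip in your sufficiency sketch: since $g=2h$, the value $g-h$ equals $h$, so ``distribute $h$ and $g-h$'' would place $h$ on all of $v_3,v_4,v_5$ and give $\omega(v_3)=g+2h=2g\neq g$; the labels actually forced by your own necessity computation (and used in the paper) are $\ell(v_3)=\ell(v_5)=h$ and $\ell(v_4)=-h$. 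Also, the $p_i=0$ step requires none of the ``combinatorial bookkeeping'' you anticipate --- it is a single weight equation at $v_1$.
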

\begin{proof}
Suppose $M_5(p_1,p_2)$ is $\A$-vertex magic with magic constant $g\neq0$ under mapping $\ell$.
Since the diameter of $M_5(p_1,p_2)$ is $3$, $p_1$ and $p_2$ cannot all be zero.
Without loss of generality, assume $p_1=0$ and $p_2\neq 0$.
Then $\ell(v_5)=\omega(v_1)-\ell(v_2)=0$, which is a contradiction.
Hence, both $p_1$ and $p_2$ are not zero and then $\ell(v_1)=\ell(v_2)=g$.
According to $\omega(v_3)=\ell(v_2)+\ell(v_4)+\ell(v_5)=g$ and $\omega(v_5)=\ell(v_1)+\ell(v_3)+\ell(v_4)$, we obtain $\ell(v_3)=\ell(v_5)=-\ell(v_4)$.
Thus, $\omega(v_4)=2\ell(v_3)=g$, which means $g$ is a square of $\A$.

Conversely, there exists $g\in \A\setminus\{0\}$ such that $g=2h$ for some $h\in A\setminus\{0\}$.
Define $\ell(v_1)=\ell(v_2)=g$, $\ell(v_4)=-h$ and $\ell(v_3)=\ell(v_5)=h$.
For the support element $v_i$, where $i=1~\text{or}~2$, label the pendant neighbors of $v_i$ such their label sum is $-h$.
It is easy to check that $M_5(p_1,p_2)$ is vertex magic with magic constant $g$ under $\ell$.
\end{proof}

Due to the group $V_4$ does not contain square element, an immediate consequence of above proposition follows.
\begin{corollary}\label{cor4.6}
Graphs $M_1(p_1,p_2,p_3)$, $M_2(p_1,p_2,p_3)$, $M_4(p_1,p_2)$ and $M_5(p_1,p_2)$cannot be group vertex magic.
\end{corollary}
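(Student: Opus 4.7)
The plan is to exhibit a single abelian group $\A$ relative to which none of the listed graphs is $\A$-vertex magic; the natural candidate suggested by the sentence preceding the corollary is the Klein four-group $V_4 = \mathbb{Z}_2 \times \mathbb{Z}_2$. The first (and only really substantive) observation to record is that $V_4$ contains no square element: if $g = 2h$ for some $h \in V_4$, then $g = 0$ since every element of $V_4$ has order at most $2$, contradicting $g \neq 0$.

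With that in hand, the corollary reduces to quoting Propositions \ref{pro4.1}, \ref{pro4.2} and \ref{pro4.5}. Each of these states that being $\A$-vertex magic forces $\A$ to contain a square element (for $M_1(p_1,p_2,p_3)$, $M_2(p_1,p_2,p_3)$ and $M_5(p_1,p_2)$ respectively). Taking $\A = V_4$ therefore rules out $\A$-vertex magicness for all three families, which is enough to conclude they are not group vertex magic.

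The case of $M_4(p_1,p_2)$ needs one extra line but no new idea. By Proposition \ref{pro4.4}, $M_4(p_1,p_2)$ being $\A$-vertex magic requires two distinct elements $g, h \in \A \setminus \{0\}$ satisfying $3g = 3h$ (in addition to the square-type inequalities $g \neq 2h$ and $2g \neq 2h$). In $V_4$ every element $x$ satisfies $2x = 0$, hence $3x = x$; consequently $3g = 3h$ collapses to $g = h$, which contradicts the required distinctness. So $M_4(p_1,p_2)$ is not $V_4$-vertex magic either, and the corollary follows.

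I do not anticipate any real obstacle: the four propositions already do all the heavy lifting, and the role of the proof is simply to package the common observation that $V_4$ has neither a square element nor a pair of distinct elements with $3g = 3h$. If anything, the only delicate point is being careful with the $M_4$ case, since its necessary condition is formulated in terms of three simultaneous relations rather than the single word ``square'', but the $V_4$ verification remains a one-liner.
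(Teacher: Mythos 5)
Your proof is correct and follows essentially the same route as the paper, which likewise takes $\A=V_4$ and invokes the absence of a square element together with Propositions \ref{pro4.1}, \ref{pro4.2}, \ref{pro4.4} and \ref{pro4.5}. In fact you are slightly more careful than the paper on the $M_4(p_1,p_2)$ case, where the obstruction is not literally the nonexistence of a square but the collapse of the conditions of Proposition \ref{pro4.4} in $V_4$ (your observation that $3g=3h$ forces $g=h$, or equivalently that $2g\neq 2h$ fails since both sides vanish).
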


\begin{proposition}\label{pro4.7}
For any abelian group $\A$, $M_6(p_1,p_2)$, $M_7(p_1,p_2)$ and $M_8(p_1,p_2)$  are not $\A$-vertex magic graphs.
\end{proposition}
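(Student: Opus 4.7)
The plan is to handle $M_6(p_1,p_2)$, $M_7(p_1,p_2)$, and $M_8(p_1,p_2)$ separately, following closely the template already used in the proof of Proposition \ref{pro4.3} for $M_3$. For each of the three graphs I will suppose it is $\A$-vertex magic with magic constant $g$ under some labeling $\ell$, and extract a contradiction whose validity depends only on the combinatorial structure of the graph, not on the group $\A$.

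The first step in each case is a reduction of the pendant-count parameters. Because the graphs have diameter $3$, any support vertex must be labeled $g$, and the analogous short arguments used for $M_1$--$M_5$ will show that if $p_1 \neq 0$ or $p_2 \neq 0$, then comparing $\omega(u) = g$ at a cycle vertex $u$ with the equation at an adjacent cycle vertex $w$ whose label is already forced to be $g$ produces $\ell(v) = 0$ for some vertex $v$, contradicting the definition of a labeling. Thus $p_1 = p_2 = 0$ in each case, and the problem collapses to a fixed skeleton.

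On the skeleton I will examine the system of equations $\omega(v) = g$ for each non-pendant vertex. In each of $M_6$, $M_7$, $M_8$ I expect either a direct application of Lemma \ref{lem0} (a pair $u,v$ with $|N(u) \cap N(v)| = \deg(u)-1 = \deg(v)$, which already rules out $\A$-vertex magicness for every $\A$), or a short subtraction of two equations of the form $\omega(x) = \omega(y) = g$ sharing all but one neighbor, which forces the remaining label to be $0$. Either outcome produces a contradiction uniform in $\A$, which is exactly what the statement requires.

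The main obstacle will be reading off the precise structure of $M_6$, $M_7$, $M_8$ from Figure \ref{fig4} and pinpointing in each graph the correct twin-like pair $(u,v)$ to feed into Lemma \ref{lem0} or into the subtraction argument. Once the right configuration is identified, the calculation is a single line and requires no group-theoretic input such as involutions, square elements, or Cauchy's theorem; this is in contrast with Propositions \ref{pro4.1}, \ref{pro4.2}, \ref{pro4.4}, and \ref{pro4.5}, where the existence of $\A$-vertex magic labelings genuinely depends on the arithmetic of $\A$.
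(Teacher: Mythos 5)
Your proposal is correct and follows essentially the same route as the paper: the pendant parameters are forced to zero by the standard ``support vertex is labeled $g$, hence an adjacent label must vanish'' computation, and then Lemma \ref{lem0} is applied to a twin-like pair on the bare skeleton (the paper uses $v_1,v_3$ in $M_6(0,0)$ and $v_1,v_7$ in $M_7(0,0)$). The only nuance is that for $M_8$ the paper's contradiction already occurs at the reduction stage --- diameter $3$ forces some $p_i\neq 0$, and any nonzero $p_i$ forces a zero label --- so no skeleton analysis is needed there, but your framing still yields a valid contradiction.
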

\begin{proof}
Suppose $M_6(p_1,p_2)$ is $\A$-vertex magic graph, then $p_1=p_2=0$.
Otherwise, without loss of generality, assume $p_1\neq 0$.
Then $\ell(v_3)=\omega(v_4)-\ell(v_1)=0$, which is a contradiction.
However, for the case of $p_1=p_2=0$, $|N(v_1)\cap N(v_3)|=deg(v_1)-1=deg(v_3)$.
By Lemma \ref{lem0}, $M_6(0,0)$ is not $\A$-vertex magic graph.

The discussion for $M_7(p_1,p_2)$ is similar to $M_6(p_1,p_2)$.
If $M_7(p_1,p_2)$ is vertex magic graph, then $p_1=p_2=0$.
Observe that in $M_7(0,0)$, $|N(v_1)\cap N(v_7)|=deg(v_1)-1=deg(v_7)$, so applying Lemma \ref{lem0} again, we have $M_7(0,0)$ is not $\A$-vertex magic.

Suppose that $M_8(p_1,p_2)$ is vertex magic graph with magic constant $g$ under $\ell$.
Since the diameter of $M_8(p_1,p_2)$ is $3$, $p_1$ and $p_2$ cannot all be zero.
Without loss of generality, assume $p_1\neq0$.
Then $\ell(v_3)=\omega(v_4)-\ell(v_1)=0$, which is a contradiction.
Hence, $M_8(p_1,p_2)$ is not $\A$-vertex magic graph.
\end{proof}

\begin{proposition}\label{pro4.8}
Let $\A$ be a finite abelian group with $|\A|\geq 3$.
Then $M_9(p_1,p_2)$ is $\A$-vertex magic if and only if $p_1=0$ and there exist different elements $g,h\in \A\setminus\{0\}$ such that $4(g-h)=0$.
\end{proposition}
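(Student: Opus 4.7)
The plan is to follow the same two-direction template used in Propositions \ref{pro4.1}--\ref{pro4.5}. For the forward direction, assume $M_9(p_1,p_2)$ admits an $\A$-vertex magic labeling $\ell$ with magic constant $g$. Since $M_9(p_1,p_2)$ has diameter $3$ it contains a support vertex, so $g\neq 0$ and every support vertex receives label $g$.

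My first step will be to argue $p_1=0$ by the standard dead-end argument: if $p_1\neq 0$ then $\ell(v_1)=g$, and reading off the magic sum at a cycle-neighbor of $v_1$ whose only non-$v_1$ neighbor is another cycle vertex forces that neighbor to be labeled $0$, contradicting $\ell\colon V\to \A\setminus\{0\}$. This is the same trick applied in the proofs of Propositions \ref{pro4.1}, \ref{pro4.2}, \ref{pro4.4}, and \ref{pro4.7}.

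Next I will set $h:=\ell(v_2)$ (or the unique "free" non-support cycle vertex, whichever role the labeling of $M_9$ assigns to it) and use the identities $\omega(u)=\omega(w)=g$ for pairs of vertices whose neighborhoods differ in exactly one vertex to collapse the unknowns. I expect the two-cycle symmetry of $M_9$ to force several label equalities, leaving the remaining labels expressible as integer combinations of $g$ and $h$ such as $g-h$ and $2(g-h)$. Substituting into one more equation $\omega(v)=g$ should collapse to $4(g-h)=0$, and the non-vanishing conditions on $\ell$ on the various labeled vertices will combine into $g\neq h$ (both already non-zero). For the converse, given distinct $g,h\in\A\setminus\{0\}$ with $4(g-h)=0$, I will reverse-engineer the explicit labeling ($h,\,g-h,\,2(g-h),\,g$ on the cycle vertices in the pattern dictated by the forward computation) and, where a support vertex carries pendants, invoke Lemma~\ref{lem1} to realize the required pendant-sum. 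Checking $\omega(v)=g$ at each vertex then reduces to the single identity $4(g-h)=0$.

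The main obstacle will be extracting the correct chain of equalities around the two cycles of $M_9$ so that the factor $4$ (rather than the factor $2$ of the ``square'' cases $M_1,M_2,M_5$ or the factor $3$ of $M_4$) emerges naturally, and keeping careful track of which labels must be non-zero so that the side condition ultimately reads $g\neq h$ rather than the stronger ``$g-h$ is not a torsion element of small order.'' Once the algebra around both cycles is set up symmetrically, the rest is routine verification using Lemma~\ref{lem1}.
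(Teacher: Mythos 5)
Your overall strategy is exactly the paper's: the dead-end argument for $p_1=0$, neighborhood-comparison identities to collapse the cycle labels, one final weight equation to produce the torsion condition, and a reverse-engineered labeling plus Lemma~\ref{lem1} for the converse. However, as written the attempt defers precisely the step that constitutes the proof --- you explicitly list ``extracting the correct chain of equalities around the two cycles so that the factor $4$ emerges'' as an obstacle still to be overcome --- and the details you do commit to are off. In $M_9(p_1,p_2)$ the vertex $v_1$ is the common vertex of two triangles $v_1v_3v_4$ and $v_1v_5v_6$ and is additionally joined to the support vertex $v_2$. Comparing $\omega(v_3)=\omega(v_4)$ and $\omega(v_5)=\omega(v_6)$ gives $\ell(v_3)=\ell(v_4)=\ell(v_5)=\ell(v_6)=g-\ell(v_1)$; all four degree-two vertices receive the \emph{same} label, and no label of the form $2(g-h)$ ever appears. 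The factor $4$ does not come from chaining equalities around the cycles but simply from the fact that all four of these vertices lie in $N(v_1)$: with $p_1=0$ and $\ell(v_2)=g$ (forced because $v_2$ is a support vertex), the single equation $\omega(v_1)=\ell(v_2)+4(g-\ell(v_1))=g$ collapses to $4(g-\ell(v_1))=0$.

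Relatedly, the distinguished element $h$ in the statement is $\ell(v_1)$, not $\ell(v_2)$; $\ell(v_2)$ is pinned to $g$ and carries no freedom. The nonvanishing conditions are then $h=\ell(v_1)\neq 0$ and $g-h=\ell(v_3)\neq 0$, which is exactly ``$g,h$ distinct and nonzero.'' For the converse the correct labeling is $\ell(v_1)=h$, $\ell(v_2)=g$, $\ell(v_3)=\ell(v_4)=\ell(v_5)=\ell(v_6)=g-h$, with the pendant neighbors of $v_2$ labeled via Lemma~\ref{lem1} so that their sum is $g-h$ (making $\omega(v_2)=h+(g-h)=g$). Until the weight equations above are actually written down and solved, the proposal is a plan rather than a proof, so you should regard the central derivation as missing rather than merely routine.
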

\begin{proof}
Suppose $M_9(p_1,p_2)$ is $\A$-vertex magic with magic constant $g$ under mapping $\ell$.
If $p_1\neq 0$, then $\ell(v_6)=\omega(v_5)-\ell(v_1)=0$, which is a contradiction.
Hence, $p_1=0$.
According to $\omega(v_3)=\omega(v_4)$, we have  $\ell(v_3)=\ell(v_4)=g-\ell(v_1)$.
Similarly, from $\omega(v_5)=\omega(v_6)$, $\ell(v_5)=\ell(v_6)=g-\ell(v_1)$ follows.
Hence, $\ell(v_3)+\ell(v_4)+\ell(v_5)+\ell(v_6)=4(g-\ell(v_1))=\omega(v_1)-\ell(v_2)=0$.

Conversely, if $p_1=0$ and there exist different element $g,h\in \A\setminus\{0\}$ such that $4(g-h)=0$.
Define $\ell(v_1)=h$, $\ell(v_2)=g$ and $\ell(v_3)=\ell(v_4)=\ell(v_5)=\ell(v_6)=g-h$.
For the pendant neighbors of $v_2$, label them such that their label sum is $-h$.
Then $M_9(p_1,p_2)$ is $\A$-vertex magic graph with magic constant $g$.
\end{proof}

\begin{proposition}\label{pro4.9}
Let $\A$ be a finite abelian group with $|\A|\geq 3$.
Then $M_{10}(p_1,p_2)$ is $\A$-vertex magic if and only if $p_1=p_2=0$ and the order of $\A$ is even.
\end{proposition}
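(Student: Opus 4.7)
The plan is to follow the now-standard template of Propositions~\ref{pro4.4}--\ref{pro4.8}. For the necessity direction I would suppose that $M_{10}(p_1,p_2)$ admits an $\A$-vertex magic labeling $\ell$ with magic constant $g\neq 0$. First, the pendant-forcing argument used repeatedly throughout this section should show $p_1=p_2=0$: if $p_i\neq 0$ then $v_i$ is a support vertex, hence $\ell(v_i)=g$, and computing $\omega$ at an appropriate neighbor of $v_i$ will force some other neighbor's label to equal $g-g=0$, contradicting that $\ell$ takes values in $\A\setminus\{0\}$.

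With $p_1=p_2=0$ in hand, I would write down the magic equations $\omega(v_j)=g$ for every vertex of $M_{10}(0,0)$. Because the graph is small and bicyclic with diameter~$3$, several vertices will share nearly identical neighborhoods, so subtracting pairs of equations should collapse the system into equalities of the form $\ell(v_i)=\ell(v_j)$ together with one or two independent relations. Substituting these back into a remaining equation, I expect the system to reduce to a single identity $2h=0$ for some necessarily nonzero $h\in\A$, i.e.\ $h$ is an involution. Cauchy's theorem (Lemma~\ref{lem3}) then forces $|\A|$ to be even.

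For sufficiency, given that $|\A|$ is even, Lemma~\ref{lem3} produces an involution $h\in\A\setminus\{0\}$. I would pick $g\in\A\setminus\{0,h\}$ as the prospective magic constant, and define $\ell$ on $M_{10}(0,0)$ by assigning $g$ to the two "hub'' vertices and distributing the values $h$ and $g-h$ (both nonzero by the choice of $g$) on the remaining vertices according to the pattern extracted in the necessity analysis. A direct check that $\omega(v_j)=g$ at every vertex, using $2h=0$, completes the proof.

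The main obstacle I anticipate is ensuring that the derived condition is exactly "$2h=0$ for some nonzero $h$'' (i.e.\ an involution), rather than the weaker "$g$ is a square'' that appeared in Propositions~\ref{pro4.1},~\ref{pro4.2}, and~\ref{pro4.5}. This sharpening should arise from an extra structural symmetry in $M_{10}$ that makes a single label appear twice in one of the $\omega$ equations while the hub labels cancel from both sides, yielding $2h=0$ directly. Identifying that symmetry cleanly from the graph and verifying that the constructed labeling in the converse never assigns $0$ are the two delicate points.
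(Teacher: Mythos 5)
Your necessity argument is essentially the paper's: the pendant--forcing step gives $p_1=p_2=0$ exactly as in the text (e.g.\ $p_1\neq 0$ forces $\ell(v_6)=\omega(v_5)-\ell(v_1)=0$), and the paper then subtracts $\omega(v_1)=\omega(v_3)$ to get $\ell(v_5)=-\ell(v_6)$ and compares the two expressions $\ell(v_1)=\omega(v_6)-\ell(v_5)$ and $\ell(v_1)=\omega(v_5)-\ell(v_6)$ to conclude $2\ell(v_5)=0$ with $\ell(v_5)\neq 0$ --- precisely the ``one label appears on both sides while the hub cancels'' mechanism you anticipated, so the condition really is an involution rather than a square. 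Where you diverge is the converse: the paper exploits the fact that every vertex of $M_{10}(0,0)$ (a triangle $v_1v_5v_6$ and a quadrilateral $v_1v_2v_3v_4$ glued at the single degree-$4$ vertex $v_1$) has even degree, and simply labels every vertex with an involution $h$, obtaining magic constant $0$ in one line. Your proposed labeling with a nonzero magic constant $g$ also works --- take $\ell(v_1)=g-h$, $\ell(v_3)=\ell(v_5)=\ell(v_6)=h$, and split $g$ into two nonzero labels on $v_2,v_4$ via Lemma~\ref{lem1} --- but it needs $g\neq h$ and the extra nonvanishing checks you flag, and your phrase ``two hub vertices'' does not match $M_{10}$, which has only one vertex of degree greater than $2$. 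The paper's constant labeling buys brevity; your version shows the magic constant can be made nonzero, which the paper does not need.
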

\begin{proof}
Suppose $M_{10}(p_1,p_2)$ is $\A$-vertex magic with magic constant $g$ under mapping $\ell$.
Without loss of generality, if $p_1\neq0$, then $\ell(v_6)=\omega(v_5)-\ell(v_1)=g-g=0$, which is a contradiction.
With the similar argument, we obtain $p_2=0$.
For $M_{10}(0,0)$, observe that \begin{align*}
\omega(v_1)=\ell(v_2)+\ell(v_4)+\ell(v_5)+\ell(v_6)=\omega(v_3)+\ell(v_5)+\ell(v_6),
\end{align*}
so $\ell(v_5)=-\ell(v_6)$.
Since $\ell(v_1)=\omega(v_6)-\ell(v_5)=g-\ell(v_5)$ and $\ell(v_1)=\omega(v_5)-\ell(v_6)=g-\ell(v_6)=g+\ell(v_5)$, we obtain $2\ell(v_5)=0$.
Hence, $\ell(v_5)$ is an involution of $\A$ and then the order of $\A$ is even.

Conversely, let $h$ be an involution of $\A$.
For $M_{10}(0,0)$, define $\ell(v)=h$ for any $v\in V$, then $M_{10}(0,0)$ is a vertex magic graph with magic constant $0$.
\end{proof}

\begin{proposition}\label{pro4.10}
Let $\A$ be a abelian group.
Then $M_{12}(p_1,p_2)$ is $\A$-vertex magic if and only if $p_2=0$, $\A$ contains an involution $h$ and there exist $g_1,g_2\in \A \setminus\{0\}$ such that $g_1+g_2=h$.
\end{proposition}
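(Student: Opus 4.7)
The plan is to mimic the structural pattern used throughout Section~4: first derive the necessary arithmetic constraints on the labels by comparing the magic-sum equations $\omega(v_i)=g$ at adjacent non-pendant vertices, then verify that the constraints we produce are sufficient by constructing an explicit labeling via Lemma~\ref{lem1}.

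For the necessity, I would assume $M_{12}(p_1,p_2)$ is $\A$-vertex magic with magic constant $g\neq 0$ under a mapping $\ell$. The first step is to rule out $p_2\neq 0$: if some vertex on the ``second'' bicyclic part is a support vertex, then it receives the label $g$, and comparing the magic-sum at this support vertex with the magic-sum at a neighbouring vertex of the same cycle forces the label of a non-pendant vertex to be $\omega(\cdot)-g=0$, contradicting $\ell(v)\in \A\setminus\{0\}$. Exactly this template appears in Propositions~\ref{pro4.1}--\ref{pro4.5}, and I expect it to transfer verbatim. Once $p_2=0$ is established, I would write down the magic-sum equations $\omega(v_i)=g$ at the cycle vertices of $M_{12}(p_1,0)$, use the symmetric adjacencies imposed by the two triangles/cycles meeting at a common vertex to conclude that certain pairs of labels are equal, and then substitute into one more equation to obtain a relation of the form $2x=0$ for a specific label $x$. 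This forces $x$ to be an involution, yielding the involution $h\in\A$ required in the statement. The remaining pendant neighbours of the unique support vertex (these correspond to the parameter $p_1$) are then forced by the magic-sum equation at that support vertex to carry labels with prescribed sum; this sum will turn out to equal $h$, and since those labels must lie in $\A\setminus\{0\}$, the existence of $g_1,g_2\in\A\setminus\{0\}$ with $g_1+g_2=h$ appears as a necessary condition (it encodes the case of interest, e.g.\ $p_1=2$, and for larger $p_1$ we invoke the strengthened form of Lemma~\ref{lem1}).

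For the sufficiency, given an involution $h$ and elements $g_1,g_2\in\A\setminus\{0\}$ with $g_1+g_2=h$, I would pick a magic constant $g\in\A\setminus\{0\}$ conveniently (the natural choice will appear from the necessary analysis; likely $g$ equal to $h$ plus some offset, or simply $g=h$), label the cycle vertices consistently with the relations derived above (assigning $h$ to the ``involution vertex'' and $g-h$ or similar to its symmetric partners), and then use Lemma~\ref{lem1} to distribute labels on the $p_1$ pendant neighbours of the support vertex so that their sum is the required value. When $p_1=2$, the two pendant labels are exactly $g_1$ and $g_2$, which is why the hypothesis is phrased in terms of a two-term decomposition; for $p_1\geq 3$, Lemma~\ref{lem1} supplies further nonzero summands.

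The main obstacle I expect is bookkeeping: correctly identifying which vertex of $M_{12}(p_1,p_2)$ plays the role of the ``involution vertex'' and tracking the cascade of forced equalities around the two cycles meeting at their shared structure. In particular, one must be careful that the derivation $2x=0$ really uses the specific topology of $M_{12}$ and not features shared with, say, $M_{10}$ (where the conclusion involves every vertex being labeled by the same involution). A secondary subtlety is handling small groups: when $|\A|=2$ the decomposition $g_1+g_2=h$ is impossible since $h+h=0\neq h$, so the stated hypothesis automatically excludes $\A=\mathbb{Z}_2$, and this should be noted to ensure consistency with the parity considerations used elsewhere in the paper.
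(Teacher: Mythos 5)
Your plan matches the paper's proof: $p_2=0$ is forced exactly by the template you describe, the magic-sum equations at $v_4$ and $v_5$ give $\ell(v_3)+\ell(v_5)=\ell(v_4)$ and then $2\ell(v_4)=\omega(v_2)-\ell(v_1)=0$, so $\ell(v_4)$ is the involution $h$; sufficiency is the explicit labeling $\ell(v_4)=h$, $\ell(v_3)=g_1$, $\ell(v_5)=g_2$, $\ell(v_2)=g+h$, $\ell(v_1)=g$ with the pendants of $v_1$ summing to $h$. The one bookkeeping point you misplace is the source of the two-term decomposition: $g_1+g_2=h$ records the labels of the two cycle vertices $v_3,v_5$ (both lying in $N(v_2)\cap N(v_4)$, hence forced to sum to $\ell(v_4)=h$), not the $p_1$ pendant neighbours of $v_1$ --- which is why the condition is a two-term decomposition independent of $p_1$, the pendants of $v_1$ being handled by Lemma~\ref{lem1} as usual.
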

\begin{proof}
Suppose $M_{12}(p_1,p_2)$ is $\A$-vertex magic with magic constant $g$ under mapping $\ell$.
If $p_2\neq0$, then $\ell(v_4)=\omega(v_5)-\ell(v_2)=0$, which is a contradiction.
Hence, $p_2=0$.
For $M_{12}(p_1,0)$,
\begin{align*}
\omega(v_4)=\ell(v_2)+\ell(v_3)+\ell(v_5)=\omega(v_5)-\ell(v_4)+\ell(v_3)+\ell(v_5),
\end{align*}
which gives that $\ell(v_3)+\ell(v_5)=\ell(v_4)$.
Thus,
\begin{align*}
\ell(v_3)+\ell(v_5)+\ell(v_4)=2\ell(v_4)=\omega(v_2)-\ell(v_1)=0,
\end{align*}
which implies  $\ell(v_4)$ is an involution of $\A$.

Conversely, let $h$ be an involution of $\A$ and there exist $g_1,g_2\in \A\setminus\{0\}$ such that $g_1+g_2=h$.
Let $g\in \A\setminus\{0,h\}$.
For $M_{12}(p_1,0)$, define $\ell(v_1)=g$, $\ell(v_3)=g_1$, $\ell(v_4)=h$, $\ell(v_2)=g+h$ and $\ell(v_5)=g_2$.
For the pendant neighbors of $v_1$, label them such that their label sum is $h$.
Then $M_{12}(p_1,0)$ is $\A$ vertex magic with constant $g$.
\end{proof}

\begin{proposition}\label{pro4.13}
Let $\A$ be a abelian group with $|\A|\geq3$.
Then  $M_{14}(p_1,p_2)$ is $\A$-vertex magic if and only if $p_1=p_2=0$ and there exist $g\in \A$ such that $g=2h_1=2h_2$ for different $h_1,h_2\in \A\setminus\{0\}$.
\end{proposition}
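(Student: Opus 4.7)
The plan is to follow the template of Propositions \ref{pro4.1}--\ref{pro4.10}. Assume $M_{14}(p_1,p_2)$ is $\A$-vertex magic with magic constant $g$ under a labeling $\ell$. The first step is to rule out $p_1\neq 0$ and $p_2\neq 0$. Whenever a vertex $v_i$ carries pendants, $v_i$ is a support vertex and hence $\ell(v_i)=g$; substituting into the equation $\omega(v_k)=g$ for a suitable neighbor $v_k$ of $v_i$ (exploiting the fact that $v_i$ and $v_k$ share a common neighbor in the bicyclic structure) should force the label of that common neighbor, or of a pendant on $v_k$, to vanish, contradicting $\ell : V \to \A\setminus\{0\}$. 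This obstruction has worked uniformly across Figure \ref{fig4}, and I expect it to dispose of both $p_1$ and $p_2$.

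With $p_1=p_2=0$, the graph reduces to the fixed configuration $M_{14}(0,0)$, and the magic conditions become a small linear system over $\A$. As in Propositions \ref{pro4.4} and \ref{pro4.9}, I would chain equalities of the form $\omega(v_a)=\omega(v_b)$ to force certain pairs of labels to coincide, and then introduce parameters $h_1$ and $h_2$ as the labels of two distinguished core vertices. The remaining magic equations should collapse to the twin identities $2h_1=g$ and $2h_2=g$. The condition $h_1,h_2\in\A\setminus\{0\}$ is automatic from $\ell$ avoiding $0$, while $h_1\neq h_2$ should follow because collapsing $h_1=h_2$ would force some other label to equal $0$, reproducing the same kind of contradiction used in the pendant-elimination step.

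For the converse, given $g,h_1,h_2$ with $g=2h_1=2h_2$, $h_1\neq h_2$ and both nonzero, I would define $\ell$ on the core vertices of $M_{14}(0,0)$ using exactly the formulas derived in the forward direction (assigning $h_1$ and $h_2$ to the two distinguished vertices and filling in $g-h_i$ on the appropriate neighbors), and then verify $\omega(v)=g$ at each vertex. The key algebraic fact is that $t=h_1-h_2$ is a nonzero involution of $\A$, which provides precisely the slack needed to keep every label nonzero while still balancing the magic sums.

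The main obstacle will be the bookkeeping of the two-parameter family: identifying the correct pair of core vertices that play the role of $h_1$ and $h_2$, and checking that the system of magic equations together with the nonvanishing constraint $\ell(V)\subseteq\A\setminus\{0\}$ translates cleanly to ``$g$ has two distinct nonzero square roots in $\A$''. I expect no techniques beyond those appearing in Propositions \ref{pro4.1}--\ref{pro4.10} will be required.
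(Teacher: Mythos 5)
Your plan matches the paper's proof essentially step for step: pendant elimination forces $p_1=p_2=0$; the magic equations on $M_{14}(0,0)$ collapse to $2\ell(v_1)=2\ell(v_6)=g$ with $h_1=\ell(v_1)$, $h_2=\ell(v_6)$, and $h_1\neq h_2$ is forced exactly as you predict, by the label $\ell(v_3)=h_2-h_1$ (a nonzero involution) having to be nonzero; the converse is the explicit labeling $\ell(v_1)=\ell(v_5)=h_1$, $\ell(v_2)=\ell(v_4)=\ell(v_6)=h_2$, $\ell(v_3)=h_2-h_1$. No new ideas beyond the paper's are needed, and none are missing from your outline.
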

\begin{proof}
Suppose $M_{14}(p_1,p_2)$ is $\A$-vertex magic with magic constant $g$ under mapping $\ell$.
If $p_1\neq0$, then $\ell(v_1)=g$ and $\ell(v_5)=\omega(v_6)-\ell(v_1)=0$, which is a contradiction.
Similarly, it is easy to prove that $p_2=0$.
According to $\ell(v_5)=\omega(v_6)-\ell(v_1)=g-\ell(v_1)$ and  $\ell(v_2)=\ell(v_4)=g-\ell(v_6)$, we have \begin{align*}
\omega(v_4)=\ell(v_2)+\ell(v_3)+\ell(v_5)=\ell(v_2)+\omega(v_2)-\ell(v_1)-\ell(v_4)+\omega(v_6)-\ell(v_1)=2g-2\ell(v_1)=g,
\end{align*}
which gives that $2\ell(v_1)=g$.
Moreover, since $\omega(v_3)=\ell(v_2)+\ell(v_4)=2g-2\ell(v_6)=g$, then $2\ell(v_6)=g$ follows.
Observe that $\ell(v_3)=\omega(v_2)-\ell(v_1)-\ell(v_4)=\ell(v_6)-\ell(v_1)\neq0$, so $\ell(v_6)\neq\ell(v_1)$.

Conversely, $\A$ contains element $g$ such that $g=2h_1=2h_2$ for different $h_1,h_2\in \A\setminus\{0\}$.
For $M_{14}(0,0)$, define $\ell(v_1)=\ell(v_5)=h_1$, $\ell(v_2)=\ell(v_4)=h_2$,  $\ell(v_3)=h_2-h_1$ and $\ell(v_6)=h_2$.
Then $M_{14}(0,0)$ is $\A$-vertex magic with magic constant $g$ under mapping $\ell$.
\end{proof}

In particular, let $\A=Z_p$, where $p$ is an odd  prime.
Since the order of  each non-identity element of $Z_p$ is $p$, we obtain the next corollary.
\begin{corollary}\label{cor4.11}
Graphs $M_9(p_1,p_2)$, $M_{10}(p_1,p_2)$, $M_{12}(p_1,p_2)$  and $M_{14}(p_1,p_2)$ are not group vertex magic.
\end{corollary}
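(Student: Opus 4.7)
The plan is to invoke each of Propositions \ref{pro4.8}, \ref{pro4.9}, \ref{pro4.10}, and \ref{pro4.13} with a single well-chosen test group, namely $\A = Z_p$ for an odd prime $p$, and check that the necessary existence condition for $\A$-vertex magicness fails in each case. This is exactly the reduction suggested by the paragraph preceding the corollary.

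The key arithmetic facts I would use are that in $Z_p$ with $p$ odd, every non-identity element has order $p$, so in particular $\gcd(2,p)=\gcd(4,p)=1$; consequently multiplication by $2$ and by $4$ are bijections on $Z_p$, there is no involution, and $|Z_p|=p$ is odd. I would then handle each graph in turn: for $M_9(p_1,p_2)$, Proposition \ref{pro4.8} requires different $g,h\in Z_p\setminus\{0\}$ with $4(g-h)=0$, but $g-h\neq 0$ together with $\gcd(4,p)=1$ forces $4(g-h)\neq 0$. For $M_{10}(p_1,p_2)$, Proposition \ref{pro4.9} requires $|\A|$ even, contradicting $|Z_p|=p$ odd. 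For $M_{12}(p_1,p_2)$, Proposition \ref{pro4.10} requires $Z_p$ to contain an involution, which it does not. For $M_{14}(p_1,p_2)$, Proposition \ref{pro4.13} requires distinct $h_1,h_2\in Z_p\setminus\{0\}$ with $2h_1=2h_2$, but injectivity of $x\mapsto 2x$ on $Z_p$ forces $h_1=h_2$.

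Since in each case the graph fails to be $Z_p$-vertex magic for every odd prime $p$, it cannot be group vertex magic, which is the desired conclusion. There is essentially no obstacle here: the corollary is a direct consequence of the four propositions once the correct test group is identified, and all the delicate work has already been done in establishing those propositions. The only thing to be careful about is that in the $M_9$ and $M_{14}$ cases the condition demands \emph{distinct} elements, so one must note $g\neq h$ (respectively $h_1\neq h_2$) before invoking injectivity of multiplication by $4$ (respectively $2$) to derive the contradiction.
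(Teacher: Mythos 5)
Your proof is correct and takes essentially the same route as the paper: the text immediately preceding the corollary likewise sets $\A=Z_p$ for an odd prime $p$, notes that every non-identity element then has order $p$ (so there is no involution, $|\A|$ is odd, and multiplication by $2$ and $4$ is injective), and concludes that the existence conditions in Propositions \ref{pro4.8}, \ref{pro4.9}, \ref{pro4.10} and \ref{pro4.13} all fail. Your write-up simply makes explicit the case-by-case verification that the paper leaves implicit.
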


\begin{proposition}\label{pro4.12}
Let $\A$ be a abelian group.
Then $M_{11}(p_1,p_2)$ is $\A$-vertex magic if and only if $p_1=p_2=0$.
Further, $M_{11}(0,0)$ is group vertex magic.
\end{proposition}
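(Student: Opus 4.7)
The plan is to prove the equivalence via a forward/backward case analysis, and derive the \emph{further} statement as an immediate corollary.

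For the necessity direction, I would assume $M_{11}(p_1, p_2)$ is $\A$-vertex magic under some labeling $\ell$ with magic constant $g$. Following the recurring argument pattern in Section 4 (compare Propositions \ref{pro4.1}, \ref{pro4.2}, \ref{pro4.9}): if $p_1 \neq 0$, then $v_1$ is a support vertex, forcing $\ell(v_1) = g$; examining the magic equation at a carefully chosen neighbor of $v_1$ along the bicyclic skeleton of $M_{11}$ then yields a label equal to $0$, contradicting $\ell: V \to \A \sm \{0\}$. The analogous argument with $v_2$ disposes of $p_2 \neq 0$, so both parameters must vanish.

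For sufficiency, once $p_1 = p_2 = 0$, I would exhibit an explicit $\A$-vertex magic labeling of $M_{11}(0, 0)$ that works uniformly in $\A$. Setting up the system $\omega(v_i) = \mu$ as linear constraints over $\A$ and solving parametrically in terms of a single non-identity element $g \in \A$ should yield labels of the form $g, -g, 2g, \ldots$; I would verify that all chosen labels are non-zero for every $g \neq 0$, and that the case $\A = \mathbb{Z}_2$ is handled by the constant labeling $1$, which requires $M_{11}(0, 0)$ to have all vertex degrees of the same parity. Once the labeling is produced in every $\A$, the \emph{further} statement follows from the definition of group vertex magicness.

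The main obstacle is the uniformity requirement. Unlike $M_1, M_2, M_4, M_5, M_9, M_{10}, M_{12}, M_{14}$---whose sufficiency proofs demanded that $\A$ contain an involution, a square, or a solution of an equation such as $4(g - h) = 0$---the graph $M_{11}(0, 0)$ must admit a magic labeling even when $\A$ is a cyclic group of odd prime order. Finding a parametric labeling that simultaneously satisfies all the magic equations, uses only non-zero elements, and remains valid in $\mathbb{Z}_2$ will be the key technical step, and it reflects that $M_{11}(0, 0)$ must possess a particularly symmetric structure whose magic system has full-rank solutions over every nontrivial abelian group.
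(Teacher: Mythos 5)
Your necessity argument is exactly the paper's: for $p_1\neq 0$ one gets $\ell(v_1)=g$ and then $\ell(v_6)=\omega(v_7)-\ell(v_1)=0$, and symmetrically for $p_2$; that half is fine. The problem is the sufficiency half, which is precisely where the content of the ``further'' claim lives, and which you leave unexecuted. You never write down a labeling, and the form you anticipate for the solution is off in a way that matters: if the solved system really produced a label such as $2g$, then $M_{11}(0,0)$ would fail to be $\A$-vertex magic for $\A=V_4$ (where $2g=0$ for every $g$), and your only announced fallback is the constant labeling for $\mathbb{Z}_2$, which does not cover $V_4$ or other groups of exponent $2$ with more than two elements. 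So the plan as stated does not yet establish group vertex magicness; it would establish it only for groups in which your parametric labels happen to be nonzero.

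What actually happens when you solve the system --- and this is the observation you are missing --- is that the magic equations at the degree-$2$ vertices propagate to force $\mu=2\mu$, i.e.\ the magic constant must be $0$ (which is permissible here because $M_{11}(0,0)$ has no support vertices), and the labels all come out in $\{g,-g\}$ for a free parameter $g$. The paper's labeling is $\ell(v_1)=\ell(v_2)=\ell(v_5)=-g$ and $\ell(v_3)=\ell(v_4)=\ell(v_6)=\ell(v_7)=g$ with $\omega\equiv 0$. Since $g$ and $-g$ are nonzero for every nonidentity $g$ in every nontrivial abelian group, this single labeling works uniformly --- no separate treatment of $\mathbb{Z}_2$, no parity check, no involution or square hypothesis. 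To repair your proof you need only carry out the linear elimination you describe, note that it forces $\mu=0$, and exhibit the resulting $\{g,-g\}$-valued labeling.
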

\begin{proof}
Suppose $M_{11}(p_1,p_2)$ is $\A$-vertex magic with magic constant $g$ under mapping $\ell$.
If $p_1\neq0$, then $\ell(v_1)=g$ and  $\ell(v_6)=\omega(v_7)-\ell(v_1)=0$, which is a contradiction.
Hence, $p_1=0$.
Using the similar discussion, we have $p_2=0$.
For $M_{11}(0,0)$ and any abelian group $\A$, let $g\in \A\setminus \{0\}$ and define $\ell(v_1)=\ell(v_2)=\ell(v_5)=-g$ and $\ell(v_3)=\ell(v_4)=\ell(v_6)=\ell(v_7)=g$.
Then, $M_{11}(0,0)$ is $\A$-vertex magic with magic constant $0$.
\end{proof}

\begin{proposition}\label{pro4.14}
For any abelian group $\A$, $M_{13}(p_1,p_2)$ is not $\A$-vertex magic.
\end{proposition}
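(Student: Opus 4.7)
The plan is to argue by contradiction, following the template established in Propositions~\ref{pro4.3} and \ref{pro4.7}. Suppose $M_{13}(p_1,p_2)$ is $\A$-vertex magic with magic constant $g$ under some labeling $\ell$. Since the diameter is $3$, at least one of $p_1,p_2$ could be nonzero; the first step is to kill off both parameters by showing any pendant attachment forces a contradiction.

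First I would assume $p_1 \neq 0$. Then the support vertex (say $v_i$) carrying these pendants satisfies $\ell(v_i) = g$, and looking at the weight equation $\omega(v_j) = g$ at an appropriate neighbor $v_j$ whose neighborhood differs from $v_i$'s only in one vertex $v_k$, I can solve for $\ell(v_k)$ and obtain $\ell(v_k) = g - g = 0$, contradicting that labels lie in $\A \setminus \{0\}$. A symmetric argument disposes of $p_2 \neq 0$. This reduces the problem to $M_{13}(0,0)$.

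Next I would handle the reduced graph $M_{13}(0,0)$. There are two natural routes, and which applies depends on the precise structure shown in Figure~\ref{fig4}. The first route is to identify two vertices $u,v$ in $M_{13}(0,0)$ with $|N(u)\cap N(v)| = \deg(u)-1 = \deg(v)$ and invoke Lemma~\ref{lem0} directly, exactly as in the $M_6$ and $M_7$ arguments. The second route is to set up the system of magic-weight equations $\omega(v_i) = g$ at each vertex, take suitable differences $\omega(v_i) - \omega(v_j)$ to eliminate shared neighbors, and conclude that some label $\ell(v_k)$ is forced to equal $0$ (or that two distinct labels must coincide in a way that contradicts a diameter-$3$ requirement).

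The main obstacle is choosing the right pair of vertices or the right pair of magic-weight equations so the cancellation is clean. Since the graphs $M_i$ of diameter $3$ are quite constrained, one expects that $M_{13}(0,0)$ contains either a pair of ``twin-like'' vertices triggering Lemma~\ref{lem0} or a short cycle structure that, via subtraction of two $\omega$-equations sharing all but one neighbor, immediately yields $\ell(v_k) = 0$. The verification is mechanical once the correct pair is identified; no case split on $\A$ is needed because the argument only uses that labels are in $\A\setminus\{0\}$.
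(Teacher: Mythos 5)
There is a genuine gap. Your reduction step (any nonzero $p_i$ forces some label to be $0$) is the same computation the paper uses, but you then treat $M_{13}(0,0)$ as a remaining case that must be analyzed, and your plan for it is not a proof: you offer two candidate strategies (find a pair triggering Lemma~\ref{lem0}, or subtract two weight equations) while explicitly conceding that which one applies ``depends on the precise structure shown in Figure~\ref{fig4}'' and without identifying the vertices or verifying that either cancellation actually works. An argument that ends with ``the verification is mechanical once the correct pair is identified'' has not identified the pair, so the case $p_1=p_2=0$ is left open in your write-up.

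The missing idea is that this case never arises. The proposition concerns $M_{13}(p_1,p_2)$ as a member of the family of bicyclic graphs of \emph{diameter $3$}, and (as the paper notes at the start of its proof) the diameter condition forces $p_1$ and $p_2$ not to be simultaneously zero --- the pendant attachments are what stretch the graph to diameter $3$. So the paper's proof is simply: at least one $p_i$ is nonzero, WLOG $p_1\neq 0$, then $\ell(v_1)=g$ and $\ell(v_4)=\omega(v_5)-\ell(v_1)=0$, contradiction. Once you have shown that every nonzero $p_i$ is impossible, you are already done by this diameter observation; no analysis of $M_{13}(0,0)$ is needed, and attempting one (as you do) both adds an unproved step and analyzes a graph that lies outside the scope of the statement. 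Compare Proposition~\ref{pro4.7}, where for $M_6$ and $M_7$ the $(0,0)$ case genuinely survives the diameter constraint and the paper does invoke Lemma~\ref{lem0}; for $M_{13}$ (as for $M_8$) it does not.
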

\begin{proof}
If $M_{13}(p_1,p_2)$ is $\A$-vertex magic graph with magic constant $g$ under $\ell$, for  some abelian group $\A$.
Since the diameter of $M_{14}(p_1,p_2)$ is $3$, $p_1$ and $p_2$ cannot all be $0$.
Without loss of generality, assume that $p_1\neq0$, then $\ell(v_1)=g$ and $\ell(v_4)=\omega(v_5)-\ell(v_1)=0$, which is a contradiction.
Hence, $M_{13}(p_1,p_2)$ is not $\A$-vertex magic for any abelian group.
\end{proof}

To summarize what we have proved, the theorem below is obtaind.
\begin{theorem}
Let $G$ be a bicyclic graph with diameter $3$.
Then $G$ is group vertex magic if and only of $G=M_{11}(0,0)$.
\end{theorem}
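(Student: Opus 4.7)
The plan is to assemble the theorem directly from the case-by-case analysis already carried out in Propositions~\ref{pro4.1}--\ref{pro4.14}. Since Figure~\ref{fig4} enumerates every bicyclic graph of diameter $3$ as one of $M_1,\ldots,M_{14}$ (with the appropriate pendant-count parameters), it suffices to show that all classes except $M_{11}(0,0)$ fail to be group vertex magic, and that $M_{11}(0,0)$ succeeds. The latter direction is already contained in Proposition~\ref{pro4.12}, so the work lies entirely in the necessity direction, i.e.\ in eliminating the other thirteen families.

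First I would dispose of the classes that are never $\A$-vertex magic for any abelian group: by Propositions~\ref{pro4.3}, \ref{pro4.7} and \ref{pro4.14}, the families $M_3$, $M_6$, $M_7$, $M_8$ and $M_{13}$ are ruled out unconditionally. Next I would invoke Corollary~\ref{cor4.6}, which observes that $V_4$ contains no square element and thereby eliminates $M_1$, $M_2$, $M_4$ and $M_5$ (since Propositions~\ref{pro4.1}, \ref{pro4.2}, \ref{pro4.4} and \ref{pro4.5} all require $\A$ to contain a square, and $M_4$ needs the additional condition $3g=3h$ with $g\neq h$, which also fails in $V_4$ but is already excluded by the square condition used in the corollary's statement). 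Then I would apply Corollary~\ref{cor4.11}: taking $\A=\mathbb{Z}_p$ for an odd prime $p$ forces every non-identity element to have odd prime order, which obstructs the involution/square/order-$4$ requirements in Propositions~\ref{pro4.8}, \ref{pro4.9}, \ref{pro4.10} and \ref{pro4.13}, so $M_9$, $M_{10}$, $M_{12}$ and $M_{14}$ are excluded.

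The only remaining class is $M_{11}$, and Proposition~\ref{pro4.12} shows that $M_{11}(p_1,p_2)$ is $\A$-vertex magic only when $p_1=p_2=0$, while $M_{11}(0,0)$ is group vertex magic via the explicit labeling assigning $-g$ to $v_1,v_2,v_5$ and $g$ to $v_3,v_4,v_6,v_7$ for an arbitrary $g\in\A\setminus\{0\}$. Combining the exclusions with this positive example yields the stated equivalence.

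I do not expect any real obstacle here, since every case has been settled in the preceding propositions and corollaries; the only care required is to make sure the enumeration in Figure~\ref{fig4} is exhaustive and that the parameter conditions in the negative results are cited correctly (in particular, that the $p_i=0$ constraints matching the forbidden pendant configurations are aligned with the labels used in each proposition). The proof is therefore essentially a bookkeeping exercise citing Propositions~\ref{pro4.1}--\ref{pro4.14} and Corollaries~\ref{cor4.6} and~\ref{cor4.11}.
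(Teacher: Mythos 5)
Your proposal is correct and matches the paper's approach exactly: the paper states this theorem as a summary of Propositions \ref{pro4.1}--\ref{pro4.14} together with Corollaries \ref{cor4.6} and \ref{cor4.11}, with the positive direction supplied by the explicit labeling of $M_{11}(0,0)$ in Proposition \ref{pro4.12}. The bookkeeping you describe, including the observation that $M_4$ is ruled out in $V_4$ by the conditions of Proposition \ref{pro4.4} rather than literally by the absence of a square, is exactly what the paper intends.
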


\frenchspacing

\end{sloppypar}

\end{document}